\numberwithin{equation}{section}
\newtheorem{Theorem}{Theorem}[section]
\newtheorem*{Theorem*}{Theorem}
\newtheorem{Proposition}[Theorem]{Proposition}
\newtheorem*{rhproblem}{Riemann--Hilbert problem}
{ \theoremstyle{definition}

 \newtheorem{Remark}[Theorem]{Remark}
}
 \DeclareMathOperator{\Leb}{Leb}
 \newcommand*{\Cset}{\mathbb{C}} 
 \newcommand*{\Rset}{\mathbb{R}} 
 \newcommand*{\Nset}{\mathbb{N}} 
 \tikzset{cross/.style={cross out, draw=black, minimum size=2*(#1-\pgflinewidth), inner sep=0pt, outer sep=0pt}, cross/.default={2pt}}
\begin{document}
\allowdisplaybreaks

\renewcommand{\thefootnote}{}

\newcommand{\arXivNumber}{2212.06526}

\renewcommand{\PaperNumber}{020}

\FirstPageHeading

 \ShortArticleName{Planar Orthogonal Polynomials as Type I Multiple Orthogonal Polynomials}

 \ArticleName{Planar Orthogonal Polynomials\\ as Type I Multiple Orthogonal Polynomials\footnote{This paper is a~contribution to the Special Issue on Evolution Equations, Exactly Solvable Models and Random Matrices in honor of Alexander Its' 70th birthday. The~full collection is available at \href{https://www.emis.de/journals/SIGMA/Its.html}{https://www.emis.de/journals/SIGMA/Its.html}}}

\Author{Sergey BEREZIN~$^{\rm ab}$, Arno B.J.~KUIJLAARS~$^{\rm a}$ and Iv\'{a}n PARRA~$^{\rm a}$}

\AuthorNameForHeading{S.~Berezin, A.B.J.~Kuijlaars and I.~Parra}

\Address{$^{\rm a)}$~Department of Mathematics, Katholieke Universiteit Leuven,\\
\hphantom{$^{\rm a)}$}~Celestijnenlaan 200B box 2400, 3001 Leuven, Belgium}
\EmailD{\href{mailto:sergey.berezin@kuleuven.be}{sergey.berezin@kuleuven.be}, \href{mailto:arno.kuijlaars@kuleuven.be}{arno.kuijlaars@kuleuven.be}, \href{mailto:ivan.parra@kuleuven.be}{ivan.parra@kuleuven.be}}

\Address{$^{\rm b)}$~St.~Petersburg Department of V.A.~Steklov Mathematical Institute of RAS,\\
\hphantom{$^{\rm b)}$}~Fontanka~27, 191023 St.~Petersburg, Russia}
 \EmailD{\href{mailto:serberezin@math.huji.ac.il}{serberezin@math.huji.ac.il}}

\ArticleDates{Received December 14, 2022, in final form March 21, 2023; Published online April 12, 2023}

\Abstract{A recent result of S.-Y.~Lee and M.~Yang states that the planar orthogonal polynomials orthogonal with respect to a modified Gaussian measure are multiple orthogonal polynomials of type~II on a contour in the complex plane. We show that the same polynomials are also type~I orthogonal polynomials on a contour, provided the exponents in the weight are integer. From this orthogonality, we derive several equivalent Riemann--Hilbert problems. The proof is based on the fundamental identity of Lee and Yang, which we establish using a new technique.}

\Keywords{planar orthogonal polynomials; multiple orthogonal polynomials; Riemann--Hilbert problems; Hermite--Pad\'{e} approximation; normal matrix model}

\Classification{42C05; 30E25; 41A21}

 \begin{flushright}
 \begin{minipage}{60mm}
 \textit{Dedicated to Alexander Its\\ on the occasion of his 70th birthday}
 \end{minipage}
 \end{flushright}

\renewcommand{\thefootnote}{\arabic{footnote}}
\setcounter{footnote}{0}

 \section{Introduction}
 This work is inspired by Lee and Yang's paper~\cite{LY2}, which showed that planar orthogonal polynomials can be viewed as multiple orthogonal polynomials of type~II on a contour in the complex plane. Their work extends an earlier result of Balogh, Bertola, Lee, and McLaughlin~\cite{BBLM}.

 We show that the same polynomials are also multiple orthogonal polynomials of type~I if the exponents in the weight are positive integers, unlike in the situation studied in~\cite{LY2}, where these exponents are arbitrary positive real numbers. We also present a novel, more transparent, technique to transform planar orthogonality into orthogonality on a contour. Before we begin, note that the title of our paper differs from the title of \cite{LY2} only in one letter, yet this makes a~considerable difference in the arguments used.

 The polynomials in question are orthogonal with respect to a modified Gaussian measure,
 \begin{gather}
 \label{eq:muW}
 \mu_W({\rm d}z) = \frac{1}{\pi} |W(z)|^2 {\rm e}^{-|z|^2} \Leb({\rm d}z),
 \end{gather}
 where $\Leb$ denotes the Lebesgue measure on~$\Cset$ (identified with~$\Rset^2$) and the weight~$W$ reads{\samepage
 \begin{equation}
 \label{eq:Wdef}
 W(z) = \prod_{j=1}^p (z-a_j)^{c_j}, \qquad z \in \Cset,
 \end{equation}
 where~$p \in \Nset$, the~$c_j$ are positive real numbers, and the~$a_j$ are distinct complex numbers.}

 If the~$c_j$ are not necessarily integer, one needs to specify the branch cuts and fix the branches in order to render~\eqref{eq:Wdef} unambiguous. This complicates the analysis, and we return to such a general scenario only episodically. In contrast, if all~$c_j$ are positive integers, $W$ becomes a~polynomial of degree~$c = \sum_{j=1}^p c_j$ and~\eqref{eq:Wdef} extends to the whole complex plane~$\mathbb{C}$. This is the situation of our primary concern.

 Denote the scalar product corresponding to~\eqref{eq:muW} by
 \begin{equation}
 \label{eq:fgscalar}
 \langle f, g \rangle_W = \int_{\Cset} f(z)\overline{g(z)} \, \mu_W({\rm d}z).
 \end{equation}
 Then, the $n$-th degree monic orthogonal polynomial $P_n$ with
 respect to~$\mu_W$ can be uniquely recovered by solving a linear system of equations for its coefficients,
 \begin{equation}
 \label{def:Pn}
 \int_{\Cset} P_{n}(z)\overline{z}^k\, \mu_W({\rm d}z)=0, \qquad k=0,\dots,n-1.
 \end{equation}

The motivation for studying planar orthogonal polynomials comes from the theory of non-Hermitian random matrices, in particular from those related to the normal matrix model. In~this model, the eigenvalues of an~$n\times n$ normal matrix have the joint density
 \begin{equation}
 \label{eigdensity}
 \frac{1}{Z_n} \prod_{j<k} |z_k-z_j|^2 \prod_{j=1}^n {\rm e}^{-V(z_j)},
 \end{equation}
 where~$V$ is the potential of the model and $Z_n$ is a normalization constant. The eigenvalues form a determinantal point process with the correlation kernel constructed in terms of the planar orthogonal polynomials orthogonal with respect to the one-particle weight~${\rm e}^{-V(z)}$. The case~\eqref{eq:muW}--\eqref{eq:Wdef} corresponds to
 \begin{equation}
 \label{Vp}
 V(z) = |z|^2 - 2 \sum_{j=1}^p c_j \log |z-a_j|.
 \end{equation}
 In particular, for integer~$c_j$'s the probability law corresponding
 to~\eqref{eigdensity}--\eqref{Vp} can be interpreted as that of a Ginibre ensemble
 of size $n+c$ conditioned on having an eigenvalue of
 multiplicity~$c_j$ at~$a_j$ for each~$j=1,\dots,p$.

 The determinantal structure in~\eqref{eigdensity} allows for a complete description of the eigenvalue correlation functions at the finite size~$n$ in terms of the correlation kernel, which in turn can be used to study the large~$n$ behavior of both the polynomials and the eigenvalues. In such studies, one typically replaces $V$ by $nV$ in~\eqref{eigdensity} to obtain a balance between the ``repulsion'' and ``confinement'' present in the determinantal model~\eqref{eigdensity}. We refer to the surveys \cite[Section 5]{BF}, \cite[Chapter 6]{GTV} and references therein for more information on the normal matrix model.


 In the analogous situation of Hermitian random matrices, the eigenvalue correlations are described by orthogonal polynomials on the real line. The theory of such polynomials is well-developed, and as a result the corresponding ensembles are understood much better than their non-Hermitian counterparts. One basic result is that the eigenvalues of Hermitian matrices and the zeros of the corresponding orthogonal polynomials (both real) have the same limiting behavior as~$n \to \infty$ (e.g., see~\cite{Deif}). More subtle results on the universality of local eigenvalue statistics were established using the characterization of orthogonal polynomials on the real line via a~$2\times2$ matrix-valued Riemann--Hilbert problem (e.g., see~\cite{FIK2}), followed by the Deift--Zhou steepest descent analysis (e.g., see~\cite{DKMVZ,DZ}).

 The planar case, on the other hand, is more intricate. One has to distinguish between the asymptotic behavior of the random eigenvalues governed by~\eqref{eigdensity} (with $V$ replaced by $nV$) and the limiting behavior of the zeros of the corresponding orthogonal polynomials.
 While it is known that the eigenvalues fill out a two-dimensional domain called the droplet, the understanding of the asymptotic behavior of the zeros of the planar orthogonal polynomials is rather limited.
 Results exist when the classical Hermite, Laguerre and Gegenbauer polynomials appear as the planar orthogonal polynomials
 (e.g., see \cite{ANPV,Karp,EM}), as well as for some special cases where the planar orthogonality can be reformulated as
 (multiple) orthogonality on a contour and the Riemann--Hilbert techniques can be used
 (e.g., see~\cite{BBLM,BGM,BEG,BK,DS,LY1}). For example, in~\cite{BBLM} the situation~\eqref{eq:muW}--\eqref{eq:Wdef}
 with $p=1$ is considered. In this case,
 the Hermitian planar orthogonality can be transformed to non-Hermitian orthogonality
 on a contour due to a special identity~\cite[Lemma 3.1]{BBLM}, and rigorous analysis
 is possible. The same identity was used in \cite{WW} in a study on moments of complex Ginibre matrices.

 Multiple orthogonality plays a role in \cite{BK,LY1,LY3}, where it can be treated by using large size Riemann--Hilbert problems (e.g., see~\cite{VAGK}).

 The common feature of the examples above is that the zeros accumulate along a one-dimensional curve
 (or a system of such curves) in the complex plane, known as the motherbody. We conjecture that this is
 a general phenomenon for all real analytic potentials, including those in~\eqref{Vp}. The
 results of~\cite{LY2, LY3} support this conjecture. Indeed, the planar orthogonality
 corresponding to~\eqref{eigdensity} with~$V$ given by
 \begin{equation*}
 V(z) = n |z|^2 - 2 \sum_{j=1}^p c_j \log |z-a_j|
 \end{equation*}
 is studied in~\cite{LY2,LY3} for the case of fixed~$c_j$'s independent of~$n$. The droplet turns out to be the unit disk, and the motherbody is supported on a multiple Szeg\H{o} curve that depends on the~$a_j$.

 In the scenario when the~$c_j$ grow linearly with~$n$ and $p \geq 2$, the Riemann--Hilbert problem in~\cite{LY3}
 has not been analyzed successfully yet. The multiple orthogonality of type~I that we discovered, as we will show,
 leads to several different Riemann--Hilbert problems. Our hope is that one of them will help to carry through with the
 steepest descent analysis.

 If all $c_j$ are integer valued, the planar orthogonal polynomials can also be
 expressed as ratios of determinants as shown in \cite{AV}. The determinants are growing in size as the
 $c_j$ increase, and therefore this determinantal formula may not be particularly useful for asymptotic analysis.

 We finally remark that the important work of Hedenmalm and Wennman~\cite{HW} provides the asymptotic behavior of the planar orthogonal polynomials in the exterior of the droplet and on its boundary (even slightly inside, under certain assumptions including those of the real analyticity of the boundary). This, however, does not give any information about the motherbody since it is inside the droplet (see~\cite[Remark 1.6\,(c)]{HW}).

 \section{Statement of result}

 Our main result is Theorem~\ref{thm:theo1} below. It gives a number of properties that are equivalent to the planar orthogonality corresponding to~\eqref{eq:muW}--\eqref{def:Pn} in the case the~$c_j$ are positive integers. Below, we use the conjugate~$W^*$ of~$W$ defined by
 \begin{equation}
 \label{eq:Wconj}
 W^*(z) = \overline{W(\bar{z})} = \prod_{j=1}^p (z-\overline{a_j})^{c_j}, \qquad z \in \Cset.
 \end{equation}
 We also use~$\mathscr{D}_z$ to denote the derivative operator with respect to~$z$, and then~$W^*(\mathscr{D}_z)$ is the differential operator
 \begin{equation}
 \label{eq:Woperator}
 W^*(\mathscr{D}_z) = \prod_{j=1}^p (\mathscr{D}_z - \overline{a_j})^{c_j}.
 \end{equation}

 \begin{Theorem}
 \label{thm:theo1}
 Let~$W$ be given by~\eqref{eq:Wdef} where all~$c_j$ are positive integers $\big($so that~$W$ is a~polynomial of degree~$c = \sum_{j=1}^p c_j\big)$, and let the~$a_j$ be distinct complex numbers. Then, the following properties are equivalent for a monic polynomial~$P_n$ of degree~$n$,
 \begin{enumerate}
 \item[$(a)$] $P_n$ is the planar orthogonal polynomial on~$\Cset$ with weight~\eqref{eq:muW}.
 \item[$(b)$] $P_n$ satisfies
 \begin{equation}
 \label{eq:Pnequiv2}
 \frac{1}{2\pi {\rm i}} \oint_{\gamma} P_n(z) W(z) \phi_k(z)\, {\rm d}z = 0, \qquad k=0,1, \dots, n-1,
 \end{equation}
 where $\gamma$ is a closed contour around the origin and
 \begin{equation}
 \label{eq:phikdef}
 \phi_k(z) = \int_0^{\bar{z} \times \infty} W^*(u) u^k {\rm e}^{-uz}\, {\rm d}u,
 \end{equation}
 where the path of integration in~\eqref{eq:phikdef} goes from~$0$ to~$\infty$ along the ray $\arg u = \arg \bar{z}$.
 \item[$(c)$] One has
 \begin{equation}
 \label{eq:Pnequiv3}
 W^*(\mathscr{D}_z) \left[ P_n(z) W(z) \right] = O(z^n)
 \end{equation}
 as~$z \to 0$.
 \item[$(d)$] There exist polynomials~$Q_j$ of~$\deg Q_j \leq c_j-1$, for $j=1, \dots, p$, such that
 \begin{equation}
 \label{eq:Pnequiv4}
 P_n(z) W(z) + \sum_{j=1}^p Q_j(z) {\rm e}^{\overline{a_j} z} = O\big(z^{n + c}\big)
 \end{equation}
 as~$z \to 0$.
 \end{enumerate}
 \end{Theorem}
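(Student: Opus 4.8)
The plan is to collapse the three analytic conditions (a), (b), (c) onto a single system of linear relations among the Taylor coefficients of $F(z):=P_n(z)W(z)$, and then to derive (d) from (c) by the theory of constant-coefficient linear differential equations. Throughout write $W^*(w)=\sum_{m=0}^{c}\beta_m w^m$ (so $\beta_c=1$) and $F(z)=\sum_{j\ge0}g_j z^j$, a polynomial of degree $n+c$. Starting from (a), since $|W(z)|^2=W(z)\overline{W(z)}=W(z)W^*(\bar z)$, the integrand in \eqref{def:Pn} is the product of the holomorphic polynomial $F$, the antiholomorphic polynomial $W^*(\bar z)\bar z^{k}=\sum_m\beta_m\bar z^{m+k}$, and ${\rm e}^{-|z|^2}$. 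Using the Fock-space orthogonality $\frac1\pi\int_{\Cset}z^{j}\bar z^{i}{\rm e}^{-|z|^2}\Leb({\rm d}z)=j!\,\delta_{ij}$ and integrating the resulting finite sum term by term, the relations in (a) become
\[
\sum_{m=0}^{c}\beta_m(k+m)!\,g_{k+m}=0,\qquad k=0,\dots,n-1.
\]
This coefficient identity is the transparent replacement for the fundamental identity of Lee and Yang, and it is available precisely because the integrality of the $c_j$ makes $W^*$ a genuine polynomial. For (c), a direct computation gives $W^*(\mathscr{D}_z)F=\sum_m\beta_m F^{(m)}$, whose coefficient of $z^{l}$ equals $\frac1{l!}\sum_m\beta_m(l+m)!\,g_{l+m}$; hence \eqref{eq:Pnequiv3}, i.e. vanishing to order $n$, is exactly the displayed system, and so (a)$\Leftrightarrow$(c).

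Next I would evaluate $\phi_k$ in closed form. The substitution $u=v/z$ sends the ray $\arg u=\arg\bar z$ exactly onto the positive real axis, because $\arg(uz)=\arg u+\arg z=0$ there; since $W^*(u)u^k{\rm e}^{-uz}$ is entire this is a genuine change of variable rather than a contour deformation, and it gives
\[
\phi_k(z)=\frac1{z^{k+1}}\sum_{m=0}^{c}\beta_m z^{-m}\int_0^{\infty}v^{k+m}{\rm e}^{-v}\,{\rm d}v=\sum_{m=0}^{c}\beta_m(k+m)!\,z^{-(k+m+1)},
\]
a Laurent polynomial; here again integrality is what keeps $\phi_k$ single valued, with no branch cuts to fix. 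Consequently $\frac1{2\pi{\rm i}}\oint_\gamma F(z)\phi_k(z)\,{\rm d}z$ is the residue of $F\phi_k$ at the origin, namely $\sum_m\beta_m(k+m)!\,g_{k+m}$, so \eqref{eq:Pnequiv2} is once more the same system and (b)$\Leftrightarrow$(c).

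Finally, for (d) I would use that $W^*(\mathscr{D}_z)=\prod_{j=1}^p(\mathscr{D}_z-\overline{a_j})^{c_j}$ annihilates every $Q_j(z){\rm e}^{\overline{a_j}z}$ with $\deg Q_j\le c_j-1$, and that, since the $a_j$ are distinct, such functions exhaust the $c$-dimensional kernel of $W^*(\mathscr{D}_z)$ (the characteristic polynomial is $W^*$, with root $\overline{a_j}$ of multiplicity $c_j$). Applying $W^*(\mathscr{D}_z)$ to \eqref{eq:Pnequiv4} kills the exponential terms and lowers the order of vanishing by the order $c$ of the operator, giving \eqref{eq:Pnequiv3} at once. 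Conversely, (c) forces $W^*(\mathscr{D}_z)F$ to be a polynomial of degree $n$ with no lower-order terms, say $\kappa z^n$; solving $W^*(\mathscr{D}_z)R=\kappa z^n$ by the Taylor-coefficient recursion with the choice $R=O(z^{n+c})$ yields a unique entire $R$, and then $R-F$ lies in the kernel of $W^*(\mathscr{D}_z)$ and hence equals $\sum_j Q_j(z){\rm e}^{\overline{a_j}z}$ with $\deg Q_j\le c_j-1$; since $R=F+(R-F)=O(z^{n+c})$ this is exactly \eqref{eq:Pnequiv4}. Thus (c)$\Leftrightarrow$(d) and all four statements are equivalent.

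I expect the main obstacle to be the honest evaluation of $\phi_k$ together with the verification that the integrality hypothesis removes every branch-cut ambiguity: this is the one place where the present setting genuinely departs from, and simplifies, the non-integer analysis of \cite{LY2}. Once $\phi_k$ is in hand, the remaining effort is the coefficient bookkeeping that makes (a), (b), and (c) collapse onto the single identity above, and the standard constant-coefficient ODE argument needed to realize the kernel element $R-F$ as the exponential sum in (d).
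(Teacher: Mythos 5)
Your proposal is correct in its overall strategy, and it takes a genuinely different route from the paper. The paper proves $(a)\Leftrightarrow(b)$ via the fundamental identity of Lee and Yang (Proposition~\ref{prop:Contour1}, proved by polar coordinates, contour deformation, and Fubini), then $(b)\Leftrightarrow(c)$ by writing $\phi_k(z)=W^*(-\mathscr{D}_z)\big[k!/z^{k+1}\big]$ and integrating by parts along $\gamma$, and finally $(c)\Leftrightarrow(d)$ by showing that a linear map $\pi$ from $\ker W^*(\mathscr{D}_z)$ to $\Cset^c$, recording selected Taylor coefficients, is injective and hence surjective. You instead collapse $(a)$, $(b)$, $(c)$ onto the single coefficient system $\sum_{m}\beta_m(k+m)!\,g_{k+m}=0$, $k=0,\dots,n-1$: condition $(a)$ via the Fock-space orthogonality $\frac1\pi\int_{\Cset} z^j\bar z^i{\rm e}^{-|z|^2}\Leb({\rm d}z)=j!\,\delta_{ij}$, which bypasses the fundamental identity altogether, and condition $(b)$ via the closed-form evaluation of $\phi_k$ as a Laurent polynomial followed by a residue computation; your change of variables $u=v/z$ is a legitimate reparametrization of the ray, and your closed form agrees with the paper's Laplace-transform expression. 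What this buys is elementarity --- no Fubini, no contour deformation, no integration by parts, only finite sums --- precisely because integer $c_j$ make $W^*$ a polynomial. What it gives up is the non-integer case covered by the paper's Proposition~\ref{prop:Contour2}, which the theorem as stated does not need. (The paper also gives an alternative proof of $(a)\Rightarrow(c)$ via the Fourier transform of $\mu_W$; your computation is in the same spirit but more elementary still.)

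Two points in your $(c)\Rightarrow(d)$ step need repair, though neither is fatal to the approach. First, it is false that $(c)$ forces $W^*(\mathscr{D}_z)[P_n(z)W(z)]=\kappa z^n$: when all $a_j\neq0$ the operator $W^*(\mathscr{D}_z)$ \emph{preserves} degree, since its zeroth-order coefficient $\prod_j(-\overline{a_j})^{c_j}$ is nonzero (the paper's Property~$W_2$), so $G(z):=W^*(\mathscr{D}_z)[P_n(z)W(z)]$ is a polynomial of degree $n+c$ that merely vanishes to order $n$ at the origin. If you literally solve $W^*(\mathscr{D}_z)R=\kappa z^n$ with this wrong right-hand side, then $R-P_nW$ does not lie in the kernel and the conclusion fails. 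The repair is immediate: run the same recursion with right-hand side $G$; nothing in your argument uses the special form $\kappa z^n$, only that $G=O(z^n)$ and $G$ is a polynomial. Second, you assert that the Taylor-coefficient recursion ``yields a unique entire $R$''. The recursion (whose equation at index $l$ has the nonzero coefficient $(l+c)!/l!$ in front of $r_{l+c}$) does produce a unique formal power series with $r_l=0$ for $l<n+c$, but this series generically has infinitely many nonzero terms and its convergence is not automatic. The standard fix: let $R$ be the solution of the initial value problem $W^*(\mathscr{D}_z)R=G$, $R(0)=\dots=R^{(c-1)}(0)=0$, which exists and is entire by constant-coefficient ODE theory; its Taylor coefficients satisfy your recursion with vanishing initial segment, hence coincide with your formal solution, which therefore converges and satisfies $R=O\big(z^{n+c}\big)$. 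With these two repairs your conclusion --- $R-P_nW\in\ker W^*(\mathscr{D}_z)$, hence $R-P_nW=\sum_jQ_j(z){\rm e}^{\overline{a_j}z}$ with $\deg Q_j\le c_j-1$, hence \eqref{eq:Pnequiv4} --- is sound, and it is a genuine alternative to the paper's surjectivity argument for $\pi$; your $(d)\Rightarrow(c)$ direction coincides with the paper's, via Properties $W_1$ and $W_3$.
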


 \begin{Remark}
 The property \eqref{eq:Pnequiv2} of the planar orthogonal polynomials has already been obtained by Lee and Yang~\cite{LY2}. They assume that the points~$a_j$ are distinct, non-zero, and with different arguments modulo~$2\pi$, and show that~\eqref{eq:Pnequiv2} leads to multiple orthogonality of type~II (see also Section~\ref{section5} below) for real positive~$c_j$'s. In this general situation, because of the branch cuts, the contour~$\gamma$ in~\eqref{eq:Pnequiv2} has to pass through all~$a_j$ and can no longer be an arbitrary contour around the origin.
 \end{Remark}

 The proof of Theorem~\ref{thm:theo1} relies on the \textit{fundamental identity} of Lee and Yang~\cite[Proposition~1]{LY2}. We state and prove it in the next section. Our proof is based on a new technique and is of independent interest. The proof in~\cite{LY2} (see also the proof of~\cite[Lemma 3.1]{BBLM}) goes as follows. One first restricts the integral in~\eqref{def:Pn} to a large disk~$D_R$ and then applies Stokes' theorem to rewrite the new integral over~$D_R$ as an integral over the boundary~$\partial D_R$. Then, \eqref{eq:Pnequiv2} follows by a contour deformation argument and by passing to the limit~$R \to \infty$.

 In our proof, we only rely on most basic and elementary facts of complex analysis and avoid the use of Stokes' theorem. We write the integral over~$\Cset$ in polar coordinates and, by analyticity, deform the angular integral to an integral over~$\gamma$. The final step is to switch the angular and the radial integrals by Fubini's theorem.

 Theorem~\ref{thm:theo1} is proved in Section~\ref{section3}. Part $(c)$ of Theorem~\ref{thm:theo1} is a very concise
 representation of the planar orthogonality in part $(a)$, and thus is of interest in its own right. The equivalence
 with part $(d)$ leads directly to the multiple orthogonality of type~I as we explain in Section~\ref{section4}.
 We show that this orthogonality leads to three different, though closely related, Riemann--Hilbert problems.
 The latter uniquely characterize the planar orthogonal polynomial~$P_n$ and the auxiliary polynomials~$Q_j$.
 We point out in passing that none of these Riemann--Hilbert problems turns out to be related to the Riemann--Hilbert
 problem in~\cite{LY2} (corresponding to type~II orthogonality) in a canonical way (see also Section~\ref{sec:conc}).
 In Section~\ref{section5}, we focus on the type~II multiple orthogonality of Lee and Yang~\cite{LY2}. We give the
 corresponding proof in the case of polynomial~$W$, which is essentially the same as the proof
 in~\cite{LY2} but more transparent since no branch cuts for~$W$ are necessary.

 \section{Fundamental identity}

 Akin to~\eqref{eq:Wconj}, we define the conjugate of a function~$Q$ by
 \begin{equation}
 \label{eq:Qconj}
 Q^*(z) = \overline{Q(\bar{z})}, \qquad z \in \Cset.
 \end{equation}
 If~$Q$ is analytic in a certain domain~$\Omega$, then~$Q^*$ is also analytic however in the conjugate domain~$\Omega^* = \{z\in \Cset \mid \overline{z} \in \Omega\}$. If~$Q$ is a polynomial, then $Q^*$ is the polynomial whose coefficients are the complex conjugates of those of the polynomial~$Q$.

 The following result is due to Lee and Yang~\cite[Proposition 1]{LY2}. As already stated above, we give a different proof. For the sake of clarity, we first deal with the case of polynomial~$W$.

 \begin{Proposition}
 \label{prop:Contour1}
 Let~$P$ and~$Q$ be polynomials and suppose that the~$c_j$ in~\eqref{eq:Wdef} are positive integers.
 Then, the fundamental identity holds,
 \begin{align}
 \langle P, Q \rangle_W &= \frac{1}{\pi} \int_{\Cset} P(z) \overline{Q(z)} |W(z)|^2 {\rm e}^{-|z|^2} \Leb({\rm d}z)\nonumber \\
 & = \frac{1}{2\pi {\rm i}} \oint_{\gamma} P(z) W(z) \int_{0}^{\bar{z} \times \infty} W^*(u) Q^*(u) {\rm e}^{-uz}\, {\rm d}u\, {\rm d}z, \label{eq:PQscalar}
 \end{align}
 where $\gamma$ is a simple closed contour that goes around the origin once in the counterclockwise direction and the path for the~$u$ integral goes from~$0$ to~$\infty$ along the ray~$\arg u = \arg \bar{z}$.
 \end{Proposition}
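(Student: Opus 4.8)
The plan is to transform the planar integral on the left of \eqref{eq:PQscalar} into the contour integral on the right by working in polar coordinates, exactly as outlined in the introduction. Write $F(z) = P(z) W(z)$, which is entire since it is a polynomial, and set $R(u) = W^*(u) Q^*(u)$, again a polynomial precisely because all $c_j$ are positive integers. Since $\overline{W(z)} = W^*(\bar z)$ and $\overline{Q(z)} = Q^*(\bar z)$, the integrand of the planar integral is $F(z)\,R(\bar z)\,\mathrm{e}^{-|z|^2}$. Passing to polar coordinates $z = r\mathrm{e}^{\mathrm{i}\theta}$ turns the left-hand side of \eqref{eq:PQscalar} into
\[
\frac{1}{\pi}\int_0^\infty\left(\int_0^{2\pi} F\big(r\mathrm{e}^{\mathrm{i}\theta}\big) R\big(r\mathrm{e}^{-\mathrm{i}\theta}\big)\,\mathrm{d}\theta\right)\mathrm{e}^{-r^2} r\,\mathrm{d}r.
\]

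First I would rewrite the inner angular integral as a contour integral over the circle $|z| = r$. On that circle one has $\bar z = r^2/z$ and $\mathrm{d}\theta = \mathrm{d}z/(\mathrm{i}z)$, so the angular integral equals $\frac{1}{\mathrm{i}}\oint_{|z|=r} F(z)\,R(r^2/z)\,\frac{\mathrm{d}z}{z}$. Because $W$ is a polynomial, $R$ is a polynomial, and $R(r^2/z)$ is a Laurent polynomial in $z$; hence the integrand $F(z)R(r^2/z)/z$ is holomorphic on $\Cset\setminus\{0\}$ with its only singularity at the origin. This is the crucial point that uses the integrality of the $c_j$: the circle $|z| = r$ can therefore be freely deformed to the given contour $\gamma$, since both wind once counterclockwise around $0$ and are thus homologous in $\Cset\setminus\{0\}$, replacing $\oint_{|z|=r}$ by $\oint_\gamma$ for every $r>0$.

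Next I would substitute the deformed angular integral back and interchange the order of the radial integration and the $\gamma$-integration. With $\gamma$ chosen to avoid the origin, so that $|z|\ge\delta>0$ on $\gamma$, the factor $R(r^2/z)$ is bounded by a polynomial in $r$ uniformly for $z\in\gamma$, while the Gaussian weight $\mathrm{e}^{-r^2} r$ makes the double integral absolutely convergent; Fubini's theorem then applies and yields
\[
\frac{1}{\pi\mathrm{i}}\oint_\gamma\frac{F(z)}{z}\left(\int_0^\infty R\big(r^2/z\big)\mathrm{e}^{-r^2} r\,\mathrm{d}r\right)\mathrm{d}z.
\]
Justifying this interchange is the only genuinely analytic step, and I would single it out as the main point requiring care; everything else is algebraic bookkeeping and an elementary contour deformation.

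It remains to recognize the inner radial integral as the $u$-integral in \eqref{eq:PQscalar}. The substitution $u = r^2/z$, with $r$ running from $0$ to $\infty$, traces precisely the ray $\arg u = \arg\bar z$ from $0$ to $\infty$; since $\mathrm{e}^{-uz} = \mathrm{e}^{-r^2}$ and $\mathrm{d}u = (2r/z)\,\mathrm{d}r$, one finds $\int_0^\infty R(r^2/z)\mathrm{e}^{-r^2} r\,\mathrm{d}r = \frac{z}{2}\int_0^{\bar z\times\infty} R(u)\mathrm{e}^{-uz}\,\mathrm{d}u$. Inserting this back cancels the factor $z$ and turns $\frac{1}{\pi\mathrm{i}}\cdot\frac12$ into $\frac{1}{2\pi\mathrm{i}}$, giving exactly the right-hand side of \eqref{eq:PQscalar}. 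As a consistency check I would expand $F(z)=\sum_m\alpha_m z^m$ and $R(u)=\sum_n\beta_n u^n$ and verify that both sides equal $\sum_m \alpha_m\beta_m\,m!$, using $\frac1\pi\int_\Cset z^m\bar z^n\mathrm{e}^{-|z|^2}\,\Leb(\mathrm{d}z)=m!\,\delta_{mn}$ on the left and the residue of $z^{m-n-1}$ on the right.
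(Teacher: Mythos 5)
Your proof is correct and follows essentially the same route as the paper's: polar coordinates, the identification $\bar z = r^2/z$ on the circle, deformation to $\gamma$ using that the integrand is holomorphic on $\Cset\setminus\{0\}$ when the $c_j$ are integers, Fubini to swap the radial and contour integrals, and finally the substitution $u = r^2/z$. Your added justification of the Fubini step and the monomial consistency check are sound refinements of the same argument, not a different approach.
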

 \begin{proof}
 Write the left-hand side of~\eqref{eq:PQscalar} in polar coordinates,
 \begin{equation}
 \label{eq:PnContour2}
 \langle P, Q\rangle_W =\frac{1}{\pi {\rm i}} \int_0^\infty \oint_{C_r} P(z) Q^*(\overline{z}) |W(z)|^2 \frac{{\rm d}z}{z} r {\rm e}^{-r^2} {\rm d}r,
 \end{equation}
 where the~$z$-integral is taken along the circle~$C_r$ of radius~$r$ around the origin. Observe that~$\overline{z} = r^2/z$ for~$z \in C_r$. In view of~\eqref{eq:Qconj}, we can write the following chain of identities,
 \begin{equation*}
 |W(z)|^2 = W(z) W^*(\overline{z}) = W(z) W^*\big(r^2/z\big), \qquad z \in C_r.
 \end{equation*}
 Since also~$Q^*(\overline{z}) = Q^*(r^2/z)$ for $z \in C_r$, the formula~\eqref{eq:PnContour2} becomes
 \begin{equation}
 \label{eq:PnContour3}
 \langle P, Q\rangle_W =\frac{1}{\pi {\rm i}} \int_0^\infty \oint_{C_r} P(z) Q^*\big(r^2/z\big) W(z) W^*\big(r^2/z\big) \frac{{\rm d}z}{z} r {\rm e}^{-r^2} {\rm d}r.
 \end{equation}

 Recall that all~$c_j$ are positive integers, thus~$W$ and~$W^*$ are polynomials and the integrand in~\eqref{eq:PnContour3} is	meromorphic in~$z$ with a sole pole at~$z=0$. Then, by Cauchy's	theorem, the contour can be deformed from~$C_r$ to a contour~$\gamma$ that goes around the origin in the counterclockwise direction once and is independent of~$r$. We use Fubini's theorem to get
 \begin{equation}
 \label{eq:PnContour4}
 \langle P, Q\rangle_W =\frac{1}{\pi {\rm i}} \oint_{\gamma} P(z)W(z) \int_0^\infty Q^*\big(r^2/z\big) W^*(r^2/z) \frac{r {\rm e}^{-r^2}}{z}\,{\rm d}r\, {\rm d}z.
 \end{equation}
 Changing variables in the inner integral, $u=r^2/z$, we arrive at~\eqref{eq:PQscalar}.
 \end{proof}

 Our method of proof easily extends to a more general setting. Assume that the~$c_j$ are positive real numbers, not necessarily integer as before. As in~\cite{LY2}, we restrict ourselves to the case that all~$a_j$ are non-zero, distinct, and have different arguments modulo~$2\pi$. For convenience, order the~$a_j$ so that~$0 \leq \arg a_1 < \arg a_2 < \cdots < \arg a_p < 2 \pi$. We can still define~$W$ by the same formula~\eqref{eq:Wdef} as earlier, however it is imperative one restrict the domain by making cuts. Following \cite{LY2}, we choose to cut along the rays
 \begin{equation}
 \label{eq:def_B}
 B = \bigcup_{j=1}^p \{z \in \Cset \mid z = a_j t,\, t \ge 1 \}.
 \end{equation}

 The domain~$\Cset \setminus B$ is simply connected. Fixing a branch for the power functions in~\eqref{eq:Wdef} renders~$W$ analytic in this domain. Note that such a choice of the branches does not affect~$|W(z)|^2$, which is assumed to be extended by continuity to the whole complex plane~$\Cset$.

 To formulate the analogue of Proposition~\ref{prop:Contour1}, we will set
 \begin{equation}
 \label{eq:def_Omega}
 \Omega = \Cset \setminus \bigcup_{j=1}^p \{z \in \Cset \mid z = a_j t,\, t \ge 0 \},
 \end{equation}
 which is a union of sectors separated by the rays~$\arg{z} = \arg{a_j}$ for~$j=1, \dots, p$.

 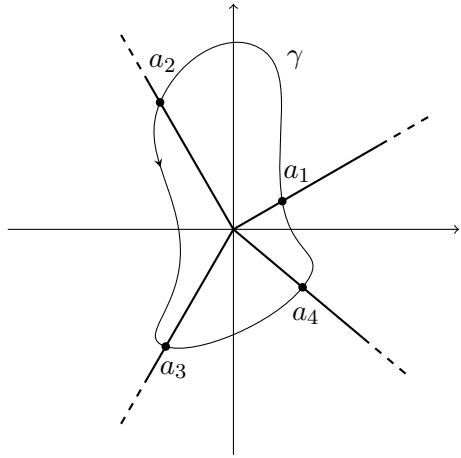
\begin{figure}[ht!]
 \centering
 \begin{tikzpicture}[scale=1.5]
 \begin{scope}
 \draw[->] (-2,0) -- (2,0);
 \draw[->] (0,-2) -- (0,2);
 \end{scope}
 \begin{scope}
 \draw[thick] (0,0) -- (30:1.5);
 \draw[thick,dashed] (30:1.5) -- (30:2);
 \draw[thick] (0,0) -- (120:1.5);
 \draw[thick,dashed] (120:1.5) -- (120:2);
 \draw[thick] (0,0) -- (240:1.5);
 \draw[thick,dashed] (240:1.5) -- (240:2);
 \draw[thick] (0,0) -- (320:1.5);
 \draw[thick,dashed] (320:1.5) -- (320:2);

 \node[draw,circle,inner sep=1pt,fill] at (30:0.5) {};
 \node[draw,circle,inner sep=1pt,fill] at (120:1.3) {};
 \node[draw,circle,inner sep=1pt,fill] at (240:1.2) {};
 \node[draw,circle,inner sep=1pt,fill] at (320:0.8) {};

 \node at (41:0.75) {$a_1$};
 \node at (113:1.6) {$a_2$};
 \node at (247:1.35) {$a_3$};
 \node at (310:1) {$a_4$};

 \node at (70:1.6) {$\gamma$};
 \end{scope}

 \begin{scope}[compass style/.style={color=black}, color=black, decoration={markings,mark= at position 0.25 with {\arrow{stealth}}}]
 \draw[postaction=decorate] plot [smooth cycle, tension=0.9] coordinates {(30:0.5) (80:1.6) (120:1.3) (200:0.5) (240:1.2) (320:0.8)};
 \end{scope}
 \end{tikzpicture}
 \caption{The domain~$\Omega$ in~\eqref{eq:def_Omega} and an example contour~$\gamma$.}
 \label{contour_gamma}
 \end{figure}

The following is~\cite[Proposition 1]{LY2}.

\begin{Proposition}\label{prop:Contour2}
 Suppose that the~$c_j$ are positive real numbers, but not necessarily integers, and the~$a_j$ are non-zero, distinct complex numbers that have different arguments modulo~$2 \pi$. Define~$\Omega$ as in~\eqref{eq:def_Omega}.\ Then~\eqref{eq:PQscalar} still holds, provided that~$\gamma$ is a counterclockwise-oriented~con\-tour in~$\Omega \cup \{a_1, \dots, a_p\}$, going around the origin once $($e.g., see Figure~$\ref{contour_gamma})$.
 \end{Proposition}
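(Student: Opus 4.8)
The plan is to adapt the proof of Proposition~\ref{prop:Contour1} to the multivalued setting, the only difference being that $W$ and $W^*$ are no longer meromorphic with a sole pole at the origin, so the contour deformation from $C_r$ to $\gamma$ must be justified more carefully using the branch structure. First I would start exactly as before, writing $\langle P, Q \rangle_W$ in polar coordinates as in~\eqref{eq:PnContour2} and using $\overline{z} = r^2/z$ on $C_r$ together with the factorization $|W(z)|^2 = W(z) W^*(r^2/z)$ to arrive at the analogue of~\eqref{eq:PnContour3}. Here I must be careful about branches: the left-hand side involves $|W(z)|^2$, which is single-valued and continuous on all of $\Cset$ by assumption, so the identity $\langle P, Q \rangle_W = \frac{1}{\pi {\rm i}} \int_0^\infty \oint_{C_r} P(z) Q^*(r^2/z) W(z) W^*(r^2/z) \frac{{\rm d}z}{z} r {\rm e}^{-r^2}\, {\rm d}r$ holds provided the product $W(z) W^*(r^2/z)$ is interpreted as the continuous extension of $|W(z)|^2$ and agrees with the chosen branch of $W$ on $\Omega$.

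The heart of the argument is the contour deformation. For fixed $r$, the integrand is analytic in $z$ on $C_r \cap \Omega$, but $W(z)$ has branch points emanating along the rays $\arg z = \arg a_j$ and $W^*(r^2/z)$ has branch points along the rays where $r^2/z$ crosses the conjugate cuts, i.e.\ where $\arg z = -\arg \overline{a_j} = \arg a_j$; crucially these coincide, so the only singular directions are the $p$ rays through the $a_j$. I would argue that on each of these rays the two factors $W$ and $W^*(r^2/\cdot)$ have complementary branch behavior so that their product, being the continuous extension of $|W|^2 / $ (the single-valued part), extends continuously across the ray. Consequently the integrand, though built from multivalued pieces, is genuinely single-valued and analytic on $C_r \cap \Omega$ and extends continuously to the rays (for $r$ such that $C_r$ meets the $a_j$). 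This lets me deform $C_r$ to any counterclockwise contour $\gamma \subset \Omega \cup \{a_1, \dots, a_p\}$ encircling the origin once, since no net residue or branch-cut contribution is picked up — the pole at $z=0$ is enclosed by both curves and the region between them lies in the domain of analyticity.

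After the deformation I would invoke Fubini's theorem to interchange the radial and contour integrals, exactly as in the passage to~\eqref{eq:PnContour4}, and then substitute $u = r^2/z$ in the inner integral. This substitution sends the ray $r \in (0,\infty)$ to the ray $\arg u = \arg(1/z) = -\arg z = \arg \overline{z}$, which is precisely the path of integration prescribed in~\eqref{eq:phikdef} and~\eqref{eq:PQscalar}, yielding the right-hand side of~\eqref{eq:PQscalar}. The main obstacle is the deformation step: I expect the principal difficulty to lie in verifying rigorously that the product $W(z) W^*(r^2/z)$ is single-valued and analytic across each ray $\arg z = \arg a_j$ (equivalently, that the branch jumps of the two factors cancel), and in justifying that $\gamma$ may pass through the points $a_j$ themselves despite $W$ having branch points there. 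I would handle the latter by an integrability estimate: near $z = a_j$ the factor $W(z)$ vanishes or blows up like $|z - a_j|^{c_j}$ with $c_j > 0$, so the integrand remains integrable and the deformation through $a_j$ is legitimate.
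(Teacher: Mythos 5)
Your overall strategy (polar coordinates, deformation of $C_r$ to $\gamma$, Fubini, then $u = r^2/z$) is the same as the paper's, but the key analytic claim on which your deformation step rests is false. You assert that the branch jumps of $W(z)$ and $W^*\big(r^2/z\big)$ cancel across the \emph{entire} ray $\arg z = \arg a_j$, so that the product is ``single-valued and analytic'' there. In fact the two factors have \emph{different} cuts lying on that ray: the cut of $W$ is the portion of the ray from $a_j$ to $\infty$, while the cut of $z \mapsto W^*\big(r^2/z\big)$ is the image of the conjugate cut under inversion, namely the portion of the ray from $0$ to $\frac{r^2}{|a_j|^2}a_j$ (the reflection of $a_j$ in the circle $C_r$). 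The jumps $\mathrm{e}^{\pm 2\pi \mathrm{i} c_j}$ and $\mathrm{e}^{\mp 2\pi \mathrm{i} c_j}$ cancel only where the two cuts \emph{overlap}, i.e., on the open segment between $a_j$ and $\frac{r^2}{|a_j|^2}a_j$; on the rest of the ray exactly one factor jumps and the product is genuinely discontinuous (this is also forced by monodromy: encircling $a_j$, the product picks up the factor $\mathrm{e}^{2\pi \mathrm{i} c_j}$ of $(z-a_j)^{c_j}$ alone, so a discontinuity line must emanate from $a_j$). This is precisely the content of the paper's equation~\eqref{eq:gen_proof_fi_eq1} and the domain $\Omega_r$ introduced there: the analytic continuation of the integrand in~\eqref{eq:PnContour3} exists only across those open segments (for $r \neq |a_j|$), not across the full rays.

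This gap is not cosmetic, because your premise would prove too much: if the product really were analytic across the whole rays, the identity~\eqref{eq:PQscalar} would hold for an \emph{arbitrary} contour around the origin, which is false for non-integer $c_j$ and is exactly why Proposition~\ref{prop:Contour2} restricts $\gamma$ to $\Omega \cup \{a_1,\dots,a_p\}$. The correct geometry also explains the role of the points $a_j$, which your argument leaves unexplained: the deformation of $C_r$ to $\gamma$ is legitimate because $C_r$ crosses the $j$-th ray at $\frac{r}{|a_j|}a_j$, an interior point of the ``good'' segment, while $\gamma$ crosses it at its endpoint $a_j$ (where the integrand is continuous since $c_j>0$, as you correctly note). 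Since $\gamma$ must be independent of $r$ for the Fubini step, and the intersection over all $r>0$ of the segments $\big[a_j, \frac{r^2}{|a_j|^2}a_j\big]$ is the single point $\{a_j\}$, the contour is \emph{forced} to cross each ray exactly at $a_j$. To repair your proof, replace the claim of cancellation along the whole ray by the cancellation on the overlap of the two cuts, conclude analyticity of the integrand on $\Omega_r = \Omega \cup \bigcup_j \big\{(1-t)a_j + t\frac{r^2}{|a_j|^2}a_j \mid 0<t<1\big\}$, and then run the deformation and Fubini arguments as you outlined.
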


 \begin{Remark}\label{rem:pr2.2_1}
 Note that there is no need to make a cut for a certain~$a_j$ if the corresponding~${c_j \in \Nset}$. Nevertheless, we do so for the sake of notational convenience.
 \end{Remark}
 \begin{Remark}
 If~$a_1=0$, the proposition still holds with the same~$\Omega$ as in~\eqref{eq:def_Omega}; however, one needs to choose~$\gamma$ in~$\Omega \cup \{a_2, \dots, a_p\}$ instead. Indeed, first note that Remark~\ref{rem:pr2.2_1} is still applicable. Hence, without loss of generality~$c_1 \notin \Nset$. One needs to modify~$B$ in~\eqref{eq:def_B} by including an additional cut from~$0$ to~$\infty$, transversal to all the other cuts. Then, in a similar way as before, one can fix a branch of~$W(z)$ in the new simply-connected domain~$\Cset \setminus B$. Note that the definition of~$\Omega$ in~\eqref{eq:def_Omega} does not change, however one of the rays collapses to the point~$z=0$. Finally, observe that~$z^{c_1}$ effectively cancels from~\eqref{eq:gen_proof_fi_eq1}, and the proof goes through in the same way except the fact that~$Q^*(r^2/z)$ still may have a pole at zero and thus~$\gamma$ must not pass through~it. The latter explains why~$a_1=0$ was excluded from~$\Omega \cup \{a_1, \dots, a_p\}$.
 \end{Remark}

 \begin{proof}[Proof of Proposition~\ref{prop:Contour2}]
 If not all of the~$c_j$ are integers, then we need branch cuts to define~$W$ and~$W^*$. For every $r > 0$, we have
 \begin{equation}
 \label{eq:gen_proof_fi_eq1}
 W(z) W^*\big(r^2/z\big) = \prod_{j=1}^p (z-a_j)^{c_j} \Bigg(\frac{\frac{r^2}{|a_j|^2} a_j-z}{z/\overline{a_j}} \Bigg)^{c_j},
 \end{equation}
 which is initially only analytic in~$\Omega$. If~$r \neq |a_j|$ for every~$j=1, \dots, p$, then there is an analytic continuation across the open straight line segment from~$a_j$ to~$\frac{r^2}{|a_j|^2} a_j$ along the ray~$\arg z = \arg a_j$ for every~$j=1, \dots, p$. That is, $W(z) W^*\big(r^2/z\big)$ has an analytic continuation to
 \begin{equation*}
 \Omega_r := \Omega \cup \bigg\{ (1-t) a_j + t \frac{r^2}{|a_j|^2} a_j \,\bigg|\, 0 < t < 1\bigg\}.
 \end{equation*}
 Observe that~$\tfrac{r^2}{|a_j|^2} a_j$ is the image of~$a_j$ under the reflection about the circle~$C_r$. Thus, $C_r$ is contained in~$\Omega_r$.

 With these preparations in mind, we follow the proof of Proposition~\ref{prop:Contour1}. The identity~\eqref{eq:PnContour3} still holds. Then, by the above and Cauchy's theorem, for each~$r > 0$ we are allowed to deform~$C_r$ to a contour~$\gamma$ as in the statement. Since~$\gamma$ is independent of $r$, we can apply Fubini's theorem to~\eqref{eq:PnContour3}. This yields~\eqref{eq:PnContour4}, and we can proceed as in the proof of Proposition~\ref{prop:Contour1}.
 \end{proof}

 Note that it is not difficult to extend the proofs of Propositions~\ref{prop:Contour1} and~\ref{prop:Contour2} to other types of weights, mutatis-mutandis. For instance, if
 \begin{equation*}
 \mu({\rm d}z) = \frac{1}{\pi} |W(z)|^2 \frac{\Leb({\rm d}z)}{\big(1+|z|^2\big)^{\alpha}}, \qquad \alpha> 1+ c,
 \end{equation*}
 (which generalizes spherical ensembles, e.g., see~\cite[Section~2.5]{BF}), one gets
 \begin{equation*}
 \langle P, Q \rangle_W = \frac{1}{2\pi {\rm i}} \oint_{\gamma} P(z) W(z) \int_{0}^{\bar z \times \infty} \frac{Q^*(u) W^*(u)}{(1+uz)^{\alpha}}\,{\rm d}u \, {\rm d}z,
 \end{equation*}
 which holds as long as~$\deg P + \deg Q < 2 \alpha - 2c-2$ so that the integral over~$\Cset$ converges.

 \section{Proof of Theorem~\ref{thm:theo1}}
 \label{section3}
 \begin{proof}
 $(a) \Leftrightarrow (b)$: Taking~$Q(z) = z^k$ in the identity~\eqref{eq:PQscalar} and recalling the definition~\eqref{eq:phikdef} of~$\phi_k$, we obtain
 \begin{equation}
 \label{eq:Pzk}
 \big\langle P, z^k \big\rangle_W = \frac{1}{2\pi {\rm i}} \oint_\gamma P(z) W(z) \phi_k(z)\, {\rm d}z.
 \end{equation}
 This identity \eqref{eq:Pzk} shows that $(a)$ and $(b)$ in Theorem~\ref{thm:theo1}
 are equivalent.

\smallskip\noindent $(b) \Leftrightarrow (c)$:
 From basic properties of the Laplace transform, we get
 \begin{equation*}
 \int_{0}^{\bar z \times \infty} W^*(u) u^k {\rm e}^{-uz}\, {\rm d}u = W^*(-\mathscr{D}_z) (-\mathscr{D}_z)^k \bigg[\frac{1}{z}\bigg] = W^*(-\mathscr{D}_z) \bigg[\frac{k!}{z^{k+1}}\bigg],
 \end{equation*}
 Thus, recalling~\eqref{eq:phikdef}, we can write
 \begin{equation*}
 \frac{1}{2\pi {\rm i}} \oint_{\gamma} P_n(z) W(z) \phi_k(z)\, {\rm d}z = \frac{k!}{2\pi {\rm i}} \oint_{\gamma} P_n(z)W(z) W^*(-\mathscr{D}_z) \bigg[\frac{1}{z^{k+1}}\bigg] {\rm d}z
 \end{equation*}
 for any polynomial $P_n$. Integrating by parts, we find
 \begin{equation*}
 \frac{1}{2\pi {\rm i}} \oint_{\gamma} P_n(z) W(z) \phi_k(z) {\rm d}z = \frac{k!}{2\pi {\rm i}} \oint_{\gamma} \frac{W^*(\mathscr{D}_z)[P_n(z)W(z)]}{z^{k+1}} \, {\rm d}z,
 \end{equation*}
 as there are no boundary terms on the closed contour~$\gamma$ and~$W^*(\mathscr{D}_z)$ is the adjoint of~$W^*(-\mathscr{D}_z)$. Thus, $P_n$ satisfies~\eqref{eq:Pnequiv2} if and only if
 \begin{equation}
 \label{eq:Taylorzero2}
 \frac{1}{2\pi {\rm i}}
 \oint_{\gamma} \frac{W^*(\mathscr{D}_z)[P_n(z)W(z)]}{z^{k+1}} \, {\rm d}z =0, \qquad k =0, \dots, n-1.
 \end{equation}
 The identities~\eqref{eq:Taylorzero2} mean that the coefficient before~$z^k$ of the polynomial~$W^*(\mathscr{D}_z)[P_n(z)W(z)]$ vanishes for~$k=0, \dots, n-1$, and we conclude that items $(b)$ and $(c)$ are equivalent.

 In the proof that $(c)$ and $(d)$ are equivalent, we are going to use three basic properties of the operator~$W^*(\mathscr{D}_z)$ from~\eqref{eq:Woperator}. These properties are stated separately for the ease of reference.

 \medskip\noindent
 \textit{Property $W_1$.} $W^*(\mathscr{D}_z)$ is a linear differential operator with kernel
 \begin{equation}
 \label{eq:Wproperty1}
 \ker W^*(\mathscr{D}_z) = \Bigg\{ \sum_{j=1}^p Q_j(z) {\rm e}^{\overline{a_j} z} \, \bigg|\, \deg{Q_j} \leq c_j-1 \text{ for } j=1, \dots, p \Bigg\}.
 \end{equation}
 The kernel is a vector space over~$\Cset$ and its dimension is~$c = \sum_{j=1}^p c_j$.

 \medskip\noindent
 \textit{Property $W_2$.}
 If~$P$ is a polynomial then~$W^*(\mathscr{D}_z) [P(z)]$ is a polynomial and
 \begin{equation}
 \label{eq:Wproperty2}
 \deg W^*(\mathscr{D}_z) [P(z)] = \deg P(z),
 \end{equation}
 provided all~$a_j \neq 0$. If~$a_1 =0$ and~$\deg P(z) \geq c_1$, then
 \begin{equation}
 \label{eq:Wproperty2alt}
 \deg W^*(\mathscr{D}_z) [P(z)] = \deg P(z)-c_1.
 \end{equation}

 \medskip\noindent
 \textit{Property $W_3$.} $W^*(\mathscr{D}_z)$ reduces the order of vanishing at~$z=0$ by~$c = \deg W$, provided the order of vanishing is greater than or equal to~$c$. Namely, for every non-negative integer~$n$, and analytic function $F$ with $F(z) = O(z^{n+c})$ as $z \to 0$, we have
 \begin{equation}
 \label{eq:Wproperty3}
 W^*(\mathscr{D}_z) [ F(z) ] = O(z^n)
 \end{equation}
 as~$z \to 0$.

 \smallskip\noindent
 $(d) \Rightarrow (c)$:
 Suppose~$P_n$ satisfies~\eqref{eq:Pnequiv4}. Then, by applying~$W^*(\mathscr{D}_z)$ and using Properties~$W_1$ and~$W_3$, we obtain
 \begin{equation}
 W^*(\mathscr{D}_z) [P_n(z) W(z)] = W^*(\mathscr{D}_z) \big[ O\big(z^{n+c}\big)\big] = O(z^n),
 \end{equation}
 which is~\eqref{eq:Pnequiv3}. Thus, $(d)$ implies $(c)$.

 \smallskip\noindent
$(c) \Rightarrow (d)$:
 In the proof, we assume without loss of generality that~$a_1 =0$ and~$c_1 \geq 0$. This includes the case that~$W$ has no zero at the origin since one can always set~$c_1=0$.

 Consider the linear mapping~$\pi\colon \ker(W^*(\mathscr{D}_z)) \to \Cset^c$ given by
 \begin{equation}
 \label{eq:pidefalt}
 \ker(W^*(\mathscr{D}_z)) \ni Q(z) = \sum_{j=0}^{\infty} q_j z^j \stackrel{\pi}{\longmapsto} (q_0, \dots, q_{c_1-1}, q_{n+c_1}, \dots, q_{n+c-1}) \in \Cset^c.
 \end{equation}
 We claim that~$\pi$ is injective, which will imply that~$\pi$ is surjective since it is a linear mapping between vector spaces of the same dimension~$c$, as follows from~\eqref{eq:pidefalt} and Property~$W_1$. Suppose~$Q \in \ker(W^*(\mathscr{D}_z))$ with~$\pi(Q) = (0, \dots, 0) \in \mathbb C^c$. Then,
 \begin{equation}
 \label{eq:QzOncalt}	
 Q(z) = \sum_{j=c_1}^{n+c_1-1} q_j z^j + O\big(z^{n+c}\big)
 \end{equation}
 as~$z \to 0$.
 From~\eqref{eq:Wproperty3} and~$W^*(\mathscr{D}_z)[Q(z)] = 0$, it then follows that
 \begin{equation}
 \label{eq:WstarOznalt}
 W^*(\mathscr{D}_z)\Bigg[ \sum_{j=c_1}^{n+c_1-1} q_j z^j\Bigg] = O(z^n)
 \end{equation}
 as $z \to 0$. Because of~\eqref{eq:Wproperty2}--\eqref{eq:Wproperty2alt}, we have
 that the left-hand side of~\eqref{eq:WstarOznalt} is a polynomial of degree~$\leq n-1$,
 and due to~\eqref{eq:WstarOznalt} it has a zero at~$z=0$ of order at least~$n$. Hence, \eqref{eq:WstarOznalt} vanishes identically, and thus~$\sum_{j=c_1}^{n+c_1-1} q_j z^j$
 belongs to the kernel of~$W^*(\mathscr{D}_z)$. By Property~$W_1$, the kernel contains polynomials up to degree $c_1-1$ but no polynomials of higher degrees. Hence, $q_j=0$ for $j=c_1, \dots, n+c_1-1$. Consequently, by~\eqref{eq:QzOncalt}
 we have that~$Q(z) = O(z^{n+c})$ as~$z \to 0$.
 In particular, $Q(0) = Q'(0) = \cdots = Q^{(c-1)}(0) = 0$.
 Thus~$Q \in \ker(W^*(\mathscr{D}_z))$ is a solution of a~homogeneous constant coefficient linear ODE of order $c$ with $c$ vanishing initial conditions.
The uniqueness theorem for such ODEs then yields~$Q \equiv 0$, which justifies the claim.

 Now, assume that~$P_n$ satisfies~\eqref{eq:Pnequiv3}, and write
 \begin{equation*}
 P_n(z) W(z) = \sum_{j=0}^{n+c} p_j z^j.
 \end{equation*}
 Since~$\pi$ is surjective, there is~$Q \in \ker(W^*(\mathscr{D}_z))$,
 \begin{equation*}
 Q(z) = \sum_{j=0}^{\infty} q_j z^j,
 \end{equation*}
 such that
$
 \pi(Q) = (-p_0, \dots, -p_{c_1-1}, -p_{n+c_1}, \dots, -p_{n+c-1}).
$
 Then,
 \begin{equation}
 \label{eq:PnWQalt}
 P_n(z) W(z) + Q(z) = \sum_{j=c_1}^{n+c_1-1} (p_j + q_j) z^j + O\big(z^{n+c}\big).
 \end{equation}
 Applying~$W^*(\mathscr{D}_z)$ to~\eqref{eq:PnWQalt} and using~\eqref{eq:Pnequiv3}, $W^*(\mathscr{D}_z)[Q(z)]=0$, and~\eqref{eq:Wproperty3}, we obtain
 \begin{equation}
 \label{eq:Wpjqjalt}
 W^*(\mathscr{D}_z)\Bigg[\sum_{j=c_1}^{n+c_1-1} (p_j + q_j) z^j\bigg] = O(z^{n})
 \end{equation}
 as $z \to 0$. We find ourselves in the situation similar to that while proving the injectivity of~$\pi$. It followed from~\eqref{eq:WstarOznalt} that~$q_j = 0$ for~$j=c_1,\dots, n+c_1-1$. In the exactly same way, now it follows from~\eqref{eq:Wpjqjalt} that~$p_j+q_j = 0$ for~$j=c_1, \dots, n+c_1-1$.

 Due to~\eqref{eq:PnWQalt}, we obtain
 \begin{equation*}
 P_n(z) W(z) + Q(z) = O\big(z^{n+c}\big),
 \end{equation*}
 which is exactly~\eqref{eq:Pnequiv4} because~$Q \in \ker(W^*(\mathscr{D}_z))$ must be of the form given in~\eqref{eq:Wproperty1}. This shows that $(c)$ implies $(d)$ and completes the proof of Theorem~\ref{thm:theo1}.
 \end{proof}

 We present an alternative proof of~$(a) \Rightarrow (c)$ by making use of the Fourier transform of the measure $\mu_W$ in~\eqref{eq:muW},
 \begin{equation}
 \label{eq:muhat}
 \widehat{\mu}_W(\zeta, \overline{\zeta}) = \int_{\Cset} {\rm e}^{\overline{\zeta} z + \zeta \bar{z}} \mu_W({\rm d}z).
 \end{equation}

 \begin{proof}[Alternative proof]
 $(a) \Rightarrow (c)$:
 Apply the Wirtinger derivatives
 \begin{equation*}
 \mathscr{D}_{\zeta} = \frac{1}{2}\bigg(\frac{\partial}{\partial u} - {\rm i} \frac{\partial}{\partial v}\bigg), \qquad
 \mathscr{D}_{\overline{\zeta}} = \frac{1}{2}\bigg(\frac{\partial}{\partial u} + {\rm i} \frac{\partial}{\partial v}\bigg),
 \end{equation*}
 where~$\zeta = u + {\rm i}v$, to~\eqref{eq:muhat}, and observe that
 \begin{equation}
 \label{eq:Dmuhat}
 \mathscr{D}_{\overline{\zeta}}^j \mathscr{D}_{\zeta}^k \big[\widehat{\mu}_W(\zeta, \overline{\zeta})\big] = \int_{\Cset} z^j \overline{z}^k {\rm e}^{\overline{\zeta} z + \zeta \overline{z}} \mu_W({\rm d}z).
 \end{equation}

 Due to linearity, \eqref{eq:Dmuhat} leads to
 \begin{equation*}
 P_n(\mathscr{D}_{\overline{\zeta}}) \mathscr{D}_{\zeta}^k \big[ \widehat{\mu}_W(\zeta, \overline{\zeta}) \big] = \int_{\Cset} P_n(z) \overline{z}^k {\rm e}^{\overline{\zeta} z + \zeta \overline{z}} \mu_W({\rm d}z).
 \end{equation*}
 Assuming that~$P_n$ is the planar orthogonal polynomial satisfiying~\eqref{def:Pn}, we have
 \begin{equation}
 \label{eq:remeq2}
 P_n(\mathscr{D}_{\overline{\zeta}}) \mathscr{D}_{\zeta}^k \big[ \widehat{\mu}_W(\zeta,\overline{\zeta}) \big] \big|_{\zeta=0, \overline{\zeta} = 0}= 0, \qquad k=0,\dots,n-1,
 \end{equation}
 where we view $\zeta$ and $\overline{\zeta}$ as two independent variables. The latter is permissible as long as the Fourier transform~$\widehat{\mu}_W(\cdot,\cdot)$, as a function of two complex variables, extends analytically to the neighborhood of~$\big\{(\xi_1,\xi_2) \in \Cset \times \Cset\mid \xi_2=\overline{\xi_1}\big\}$. This is the case we encounter below.

 For~$\mu_W$ from~\eqref{eq:muW} with polynomial~$W$, the Fourier transform can be computed as follows,
 \begin{align}
 \label{eq:hatmuW}
 \widehat{\mu}_W(\zeta,\bar{\zeta}) &=\frac{1}{\pi} \int_{\Cset} {\rm e}^{ \overline{\zeta} z + \zeta \overline{z}} |W(z)|^2 {\rm e}^{-|z|^2} \Leb{({\rm d}z)}\nonumber
 \\
 &= \frac{1}{\pi} W(\mathscr{D}_{\overline{\zeta}})W^*(\mathscr{D}_\zeta) \int_{\Rset^2} {\rm e}^{2(u x + v y)} {\rm e}^{-x^2 -y^2}\, {\rm d}x\,{\rm d}y \qquad
\nonumber
 \\
 &= W(\mathscr{D}_{\overline{\zeta}})W^*(\mathscr{D}_\zeta) \big[{\rm e}^{\zeta \overline{\zeta}} \big],
 \end{align}
 where $\zeta = u + {\rm i}v$, $z=x+ {\rm i}y$.

 Inserting~\eqref{eq:hatmuW} into~\eqref{eq:remeq2}, and changing the order of the differential operators, we obtain
 \begin{equation}
 \label{eq:remeq3}
 \mathscr{D}_{\zeta}^k W^*(\mathscr{D}_\zeta) (P_n W)(\mathscr{D}_{\overline{\zeta}}) \big[{\rm e}^{\zeta \overline{\zeta}}\big] \big|_{\zeta=0, \bar{\zeta} =0}= 0, \qquad k=0,\dots,n-1.
 \end{equation}
 Since
 \begin{equation*}
 (P_n W)(\mathscr{D}_{\overline{\zeta}}) \big[{\rm e}^{\zeta \overline{\zeta}} \big]	\big|_{\overline{\zeta}=0} = P_n(\zeta) W(\zeta),
 \end{equation*}
 we see that~\eqref{eq:remeq3} implies
 \begin{equation*}
 \mathscr{D}_{\zeta}^k W^*(\mathscr{D}_\zeta) [P_n(\zeta)W(\zeta)] \big|_{\zeta=0} = 0,\qquad k=0,\dots,n-1,
 \end{equation*}
 which is equivalent to~\eqref{eq:Pnequiv3}.
 \end{proof}

 This alternative proof will work for other types of measures~$\mu$ as long as the Fourier transform~$\widehat{\mu}$ has a simple enough representation. On the other hand, non-polynomial weights~$W$ will require the use of fractional derivatives, which are known to be non-local operators, causing substantial complications in this proof.

 \section{Type I multiple orthogonality}
 \label{section4}

 The property~\eqref{eq:Pnequiv4} in part $(d)$ of Theorem~\ref{thm:theo1} can be regarded as a Hermite--Pad\'{e} approximation problem of type~I at the origin. The general form of such approximation problems is the following. Given a collection of analytic functions (or, more generally, formal power series)~$f_0, \dots, f_p$ at~$z=0$ and a multi-index~$\vec{n} = (n_0, \dots, n_p) \in \Nset^{p+1}$, find polynomials~$Q_0, Q_1, \dots, Q_p$ of degrees~$\deg Q_j \leq n_j-1$, $j=0,1, \dots, p$, such that
 \begin{equation*}
 \sum_{j=0}^p Q_j(z) f_j(z) = O\big(z^{|\vec{n}|-1}\big)
 \end{equation*}
 as~$z \to 0$, where~$|\vec{n}| = \sum_{j=0}^p n_j$. For more information on Hermite--Pad\'{e} approximation and multiple orthogonal polynomials, we refer to~\cite{VA} and references therein.

 The problem~\eqref{eq:Pnequiv4} becomes a Hermite--Pad\'{e} type~I approximation problem
 for polynomials~$Q_0=P_n, Q_1, \dots, Q_p$, corresponding
 to the weights~$f_0(s)=W(s),f_1(s)={\rm e}^{\overline{a_1}s}, \dots, f_p(s)={\rm e}^{\overline{a_p}s}$
 and to the multi-index $\vec{n} = (n+1,c_1,\dots,c_p)$.
 This leads directly to the type~I multiple orthogonality. Indeed, the function on the
 left-hand side of~\eqref{eq:Pnequiv4} has vanishing Taylor coefficients up to and
 including order~$n+c-1$. By the Cauchy integral formula, this tells us that~\eqref{eq:Pnequiv4} is equivalent to
 \begin{equation}
 \label{eq:Pnequiv5}
 \frac{1}{2\pi {\rm i}} \oint_\gamma \Bigg(P_n(s) \frac{W(s)}{s^{n+c}} + \sum_{j=1}^p Q_j(s) \frac{{\rm e}^{\overline{a_j} s}}{s^{n+c}}\Bigg) s^k\, {\rm d}s = 0,
 \qquad k=0, \dots, n+c-1,
 \end{equation}
 where~$\gamma$ is a simple closed contour around the origin. This is multiple orthogonality of type~I on the contour~$\gamma$ with~$p+1$ weight functions~$W(s)/s^{n+c}, {\rm e}^{\overline{a_1}s}/s^{n+c}, \dots, {\rm e}^{\overline{a_p}s}/s^{n+c}$ which are meromorphic with a sole pole at~$s=0$.

 Both multiple orthogonality of type~I and type~II are characterized by Riemann--Hilbert problems that were identified by Van Assche, Geronimo, and Kuijlaars in~\cite{VAGK}. For the particular case~\eqref{eq:Pnequiv5}, the Riemann--Hilbert problem is of size~$(p+2) \times (p+2)$ and its jump is on the contour~$\gamma$. Below, we will use the following notation. For any oriented contour~$\gamma$ and a function~$Y$ defined in~$\Cset \setminus \gamma$, we write~$Y_+$ and~$Y_-$ to denote the limiting values of~$Y$ on~$\gamma$ from the left and right sides, respectively.

 \begin{rhproblem}[type I multiple orthogonality for~$P_n,Q_1,\dots,Q_p$]
 Find solution $Y\colon \Cset \setminus \gamma \to \Cset^{(p+2) \times (p+2)}$
 such that
 \begin{enumerate}
 \item[$(1)$] $Y(z)$ is analytic for~$z \in \Cset \setminus \gamma$;

 \item[$(2)$] $Y_+(z) = Y_-(z) J_Y(z)$ for $z \in \gamma$, where
$
 J_Y(z) =
 \begin{bmatrix}
 1 & 0&\cdots &0& W(z)/z^{n+c}\\
 0 & 1& \cdots &0& {\rm e}^{\overline{a_1} z}/z^{n+c}\\
 \vdots & \vdots&\ddots&\vdots&\vdots\\
 0 & 0& \cdots &1& {\rm e}^{\overline{a_p} z}/z^{n+c}\\
 0&0&\cdots&0& 1
 \end{bmatrix}$;

 \item[$(3)$] $Y(z) = \big(I + O\big(\frac{1}{z}\big)\big)
 \begin{bmatrix}
 z^n&0&\cdots&0&0\\
 0&z^{c_1}&\cdots& 0&0\\
 \vdots&\vdots& \ddots&\vdots&\vdots\\
 0&0&\cdots&z^{c_p}&0\\
 0&0&\cdots&0&z^{-(n+c)}
 \end{bmatrix}
 $ as~$z \to \infty$.
 \end{enumerate}
 \end{rhproblem}

 The unique solution of the Riemann--Hilbert problem has the polynomials~$P_n$, $Q_1, \dots, Q_p$ in its first row. Namely,
 \begin{equation*}
 Y(z)=
 \begin{bmatrix}
 P_n(z) & Q_1(z)&\cdots& Q_p(z)& \displaystyle \frac{1}{2 \pi {\rm i}} \oint_\gamma \Bigg(P_n(s) \frac{W(s)}{s^{n+c}} + \sum_{j=1}^pQ_j(s)\frac{{\rm e}^{\overline{a_j}s}}{s^{n+c}}\Bigg)\frac{{\rm d}s}{s-z}\\
 * & * & \cdots &*&*\\
 \vdots & \vdots & \ddots & \vdots&\vdots\\
 * & * & \dots &*&*
 \end{bmatrix}
 \end{equation*}
 for~$z \in \Cset \setminus \gamma$. The other rows are filled with type I multiple orthogonal polynomials of slightly different degrees (for details, see~\cite{VAGK}).

 One can reduce the size of the Riemann--Hilbert problem as follows. Divide both sides of~\eqref{eq:Pnequiv4} by~$W$ and carry over~$P_n$ to the right-hand side. Then, we obtain
 \begin{equation}
 \label{eq:Pnequiv6}
 \sum_{j=1}^p Q_j(z) \frac{{\rm e}^{\overline{a_j}z}}{W(z)} = -P_n(z) + O\big(z^{n+c}\big),
 \end{equation}
 where we assume that~$a_j \neq 0$ for all~$j$. Then by Cauchy's formula we obtain
 \begin{equation}
 \label{eq:Pnequiv7}
 \frac{1}{2\pi {\rm i}}\oint_\gamma \sum_{j=1}^p Q_j(s) \frac{{\rm e}^{\overline{a_j} s}}{s^{n+c} W(s)}\ s^k\, {\rm d}s = - \delta_{k,c-1}, \qquad k=0, \dots, c-1,
 \end{equation}
 where~$\gamma$ is a counterclockwise-oriented closed contour going around the origin once, such that all the zeros of~$W$ lie in its exterior. This formulation of the type~I multiple orthogonality leads to a Riemann--Hilbert problem of size~$(p+1) \times (p+1)$.

 Note that we use~$Y$ again to denote the solution of the Riemann--Hilbert problem, although this solution is different from the earlier one that we also called~$Y$. We trust that this does not lead to any confusion.

 \begin{rhproblem}[type I multiple orthogonality for~$Q_1, \dots, Q_p$]
 Find solution $Y\colon \Cset \setminus \gamma \to \Cset^{(p+1) \times (p+1)}$ such that
 \begin{enumerate}
 \item[$(1)$] $Y(z)$ is analytic for~$z \in \Cset \setminus \gamma$;
 \item[$(2)$] $Y_+(z) = Y_-(z) J_Y(z)$ for $z \in \gamma$, where
$
 J_Y(z) =
 \begin{bmatrix}
 1 & \cdots &0& {\rm e}^{\overline{a_1} z}/(z^{n+c} W(z))\\
 \vdots &\ddots&\vdots&\vdots\\
 0 & \cdots &1& {\rm e}^{\overline{a_p} z}/(z^{n+c} W(z))\\
 0&\cdots&0& 1
 \end{bmatrix}$;

 \item[$(3)$] $Y(z) = \big(I + O\big(\frac{1}{z}\big)\big)
 \begin{bmatrix}
 z^{c_1}&\cdots&\cdots& 0\\
 \vdots& \ddots&\vdots&\vdots\\
 0&\cdots&z^{c_p}&0\\
 0&\cdots&0&z^{-c}
 \end{bmatrix}$ as $z \to \infty$.
 \end{enumerate}
 \end{rhproblem}

 The unique solution to this Riemann--Hilbert problem has the polynomials~$Q_1, \dots, Q_p$ in its last row,
 \begin{equation*}
 Y(z)=
 \begin{bmatrix}	
 * & \cdots &*&*\\
 \vdots & \ddots & \vdots & \vdots\\
 * & \cdots &*&* \\
 Q_1(z)&\cdots& Q_p(z)& \displaystyle \frac{1}{2 \pi {\rm i}} \oint_\gamma \sum _{j=1}^pQ_j(s)\frac{{\rm e}^{\overline{a_j}s}}{s^{n+c} W(s)}\frac{{\rm d}s}{s-z}
 \end{bmatrix}\!,
 \end{equation*}
 for~$z \in \Cset \setminus \gamma$. Once we know~$Q_1, \dots, Q_p$, then~$P_n$ can be recovered from~\eqref{eq:Pnequiv6}. That is, $-P_n$ is the~$n$-th partial sum of the Maclaurin series of~$\sum_{j=1}^p Q_j(z) \frac{{\rm e}^{\overline{a_j} z}}{W(z)}$.

 As a remark we add that if~$a_1=0$, then the O-term in~\eqref{eq:Pnequiv6} changes to~$O\big(z^{n+c-c_1}\big)$. The condition~\eqref{eq:Pnequiv7} and the Riemann--Hilbert problem similar to the above can still be written after appropriate modifications.

 In a similar manner, we can single out one of the~$Q_j$ in~\eqref{eq:Pnequiv4}. For the ease of notation, let us choose~$Q_p$ and rewrite~\eqref{eq:Pnequiv4} as
 \begin{equation*}
 P_n(z) W(z) {\rm e}^{-\overline{a_p}z} + \sum_{j=1}^{p-1} Q_j(z) {\rm e}^{(\overline{a_j} - \overline{a_p})z} = - Q_p(z) + O\big(z^{n+c}\big).
 \end{equation*}
 Then, by Cauchy's integral formula,
 \begin{align}
 \label{eq:Pnequiv8}
 \frac{1}{2\pi {\rm i}} \oint _{\gamma} \Bigg(P_n(s) \frac{W(s) {\rm e}^{-\overline{a_p} s}}{s^{n+c}} + \sum_{j=1}^{p-1} Q_j(s) \frac{{\rm e}^{(\overline{a}_j-\overline{a_p}) s}}{s^{n+c}}\Bigg) s^k\, {\rm d}s=0
 \end{align}
 with $k=0, \dots, n+c-c_{p}-1$, where $\gamma$ is an arbitrary closed contour around the origin.

 The corresponding Riemann--Hilbert problem is as follows.
 \begin{rhproblem}[type I multiple orthogonality for~$P_n,Q_1,\dots,Q_{p-1}$]
 Find solution $Y\colon \Cset \setminus \gamma \to \Cset^{(p+1) \times (p+1)}$
 such that
 \begin{enumerate}
 \item[$(1)$] $Y(z)$ is analytic for~$z \in \Cset \setminus \gamma$;

 \item[$(2)$] $Y_+(z) = Y_-(z) J_Y(z)$, $z \in \gamma$,
$
 \begin{bmatrix}
 1 & 0&\cdots &0& W(z){\rm e}^{-\overline{a_p} z}/z^{n+c}\\
 0 & 1& \cdots &0& {\rm e}^{(\overline{a_1}-\overline{a_p}) z}/z^{n+c}\\
 \vdots & \vdots&\ddots&\vdots&\vdots\\
 0 & 0& \cdots &1& {\rm e}^{(\overline{a_{p-1}} -\overline{a_p})z}/z^{n+c}\\
 0&0&\cdots&0& 1
 \end{bmatrix}
$;

 \item[$(3)$] $Y(z) = \big(I + O\big(\frac{1}{z}\big)\big)
 \begin{bmatrix}
 z^n&0&\cdots&0&0\\
 0&z^{c_1}&\cdots& 0&0\\
 \vdots&\vdots& \ddots&\vdots&\vdots\\
 0&0&\cdots&z^{c_{p-1}}&0\\
 0&0&\cdots&0&z^{-n-c+c_{p}}
 \end{bmatrix}
 $ as $z \to \infty$.
 \end{enumerate}
 \end{rhproblem}	

 The unique solution is
 \begin{gather*}
\setlength{\arraycolsep}{2.5pt}
 Y(z)= \begin{bmatrix}
 P_n(z) & Q_1(z)&\cdots&Q_{p-1}(z)& \displaystyle \frac{1}{2 \pi {\rm i}} \oint_\gamma \! \Bigg(\!P_n(s) \frac{W(s){\rm e}^{-\overline{a_p}z}}{s^{n+c}} +
 \sum_{j=1}^{p-1} Q_j(s) \frac{{\rm e}^{(\overline{a_j} - \overline{a_p}) s}}{s^{n+c}}\Bigg) \frac{{\rm d}s}{s\!-\!z}\\
 * & * & \cdots &*&*\\
 \vdots & \vdots & \ddots & \vdots&\vdots\\
 * & * & \cdots &*&*
 \end{bmatrix}
 \end{gather*}
 for~$z \in \Cset \setminus \gamma$.

 For $p=1$, we recover the Riemann--Hilbert problem for the orthogonal polynomials from~\cite{BBLM}. Indeed, if~$p=1$, $W(z)=(z-a)^c$, then~\eqref{eq:Pnequiv8} yields
 \begin{equation*}
 \int _{\gamma} \frac{P_n(s) W(s) {\rm e}^{-\overline{a} s}}{s^{n+c}} s^k\, {\rm d}s=0, \qquad k=0, \dots, n-1,
 \end{equation*}
 which is a usual non-Hermitian orthogonality on a contour. The above Riemann--Hilbert problem reduces to the usual Riemann--Hilbert problem for orthogonal polynomials that is known from the work of Balogh, Bertola, Lee, and McLaughlin~\cite{BBLM}. The first row of the solution is
 \begin{equation*}
 Y(z) =
 \begin{bmatrix}
 P_n(z) & \displaystyle \frac{1}{2\pi {\rm i}} \int_{\gamma} \frac{P_n(s) (s-a)^c {\rm e}^{-\overline{a} s}}{s^{n+c}} \frac{{\rm d}s}{s-z} \\
 * & *
 \end{bmatrix}\!, \qquad z \in \Cset \setminus \gamma.
 \end{equation*}

 \section{Type II multiple orthogonality}
 \label{section5}
 The planar orthogonality corresponding to~\eqref{eq:muW}--\eqref{eq:Wdef} is equivalent to the type~II multiple orthogonality. This has been established by Lee and Yang in~\cite{LY2}. We give a proof in the case of polynomial $W$, which is essentially the same as the one in~\cite{LY2}, except that the situation is more transparent due to the lack of branch cuts for~$W$ and~$\phi_k$.

 Type II multiple orthogonality means that there exists functions~$w_1, \dots, w_p$ and non-negative integers~$n_1, \dots, n_p$ with~$n = \sum_{k=1}^p n_k$ such that~$P_n$ is the unique polynomial of degree~$n$ satisfying
 \begin{equation}
 \label{eq:typeII}
 \frac{1}{2\pi {\rm i}} \oint_{\gamma} P_n(z) W(z) z^m w_k(z)\, {\rm d}z = 0
 \end{equation}
 for~$k=1, \dots, p$ and~$m = 0, \dots, n_k-1$, where~$\gamma$ is a closed contour around the origin.

 \begin{Theorem}
 \label{thm:typeII}
 Suppose~$W$ is a polynomial. Given~$n \in \Nset$, choose integers~$n_1, \dots, n_p$ such that
 \begin{equation}
 \label{eq:njdef}
 \sum_{j=1}^p n_j = n, \qquad
 \bigg\lfloor \frac{n}{p} \bigg\rfloor \leq n_j \leq \bigg\lceil \frac{n}{p}\bigg \rceil,
 \end{equation}
 and define~$w_k$, $k=1, \dots, p$, by
 \begin{equation} \label{eq:wkdef} w_k(z) = \int_0^{\bar{z} \times \infty} \prod_{j=1}^p (u-\overline{a_j})^{c_j + n_j-\delta_{k,j}} {\rm e}^{-uz} \, {\rm d}u.
 \end{equation}
 Then, the planar orthogonal polynomial~$P_n$ is the unique monic polynomial of degree~$n$ that satisfies~\eqref{eq:typeII}.
 \end{Theorem}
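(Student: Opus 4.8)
The plan is to reduce the type~II conditions~\eqref{eq:typeII} to the planar orthogonality $\langle P_n, z^l\rangle_W = 0$, $l=0,\dots,n-1$, by means of the fundamental identity~\eqref{eq:PQscalar}. Writing $w_k(z) = \int_0^{\bar z \times \infty} W^*(u) g_k(u)\, {\rm e}^{-uz}\, {\rm d}u$ with $g_k(u) = \prod_{j=1}^p (u-\overline{a_j})^{n_j-\delta_{k,j}}$ (so $\deg g_k = n-1$), the only thing standing between~\eqref{eq:typeII} and a direct application of~\eqref{eq:PQscalar} is the external factor $z^m$. I would absorb it by integrating by parts $m$ times in the inner variable $u$ along the ray $\arg u = \arg \bar z$: since the boundary contribution at infinity vanishes, one obtains, with $f = W^* g_k$,
\[
z^m \int_0^{\bar z \times \infty} f(u)\, {\rm e}^{-uz}\, {\rm d}u = R_{k,m}(z) + \int_0^{\bar z \times \infty} f^{(m)}(u)\, {\rm e}^{-uz}\, {\rm d}u,
\]
where $R_{k,m}$ is a polynomial in $z$ of degree $<m$ collecting the boundary values at $u=0$. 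As the integrand in~\eqref{eq:typeII} is meromorphic with its only pole at the origin, multiplying by $P_n(z)W(z)$ and integrating over the closed contour $\gamma$ kills the polynomial part $R_{k,m}$, so~\eqref{eq:typeII} becomes the statement that the contour integral of $P_n W \int_0^{\bar z\times\infty} (W^* g_k)^{(m)}\, {\rm e}^{-uz}\, {\rm d}u$ vanishes.

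Next I would establish the divisibility fact that is precisely where the balance condition~\eqref{eq:njdef} enters. The polynomial $W^*(u)g_k(u) = \prod_j (u-\overline{a_j})^{c_j+n_j-\delta_{k,j}}$ vanishes at $\overline{a_j}$ to order $c_j+n_j-\delta_{k,j}$, so $(W^* g_k)^{(m)}$ vanishes there to order $c_j+n_j-\delta_{k,j}-m$. For $0\le m\le n_k-1$ the inequality $m \le n_k-1 \le n_j$, which~\eqref{eq:njdef} guarantees for every $j$, forces this order to be at least $c_j$; hence $(W^* g_k)^{(m)}$ is divisible by $W^*$, and I may write $(W^* g_k)^{(m)} = W^* \tilde g_{k,m}$ with $\deg \tilde g_{k,m} = n-1-m$. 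Setting $\tilde Q_{k,m}^* = \tilde g_{k,m}$ and invoking~\eqref{eq:PQscalar} with $P=P_n$, $Q=\tilde Q_{k,m}$, the $(k,m)$ condition in~\eqref{eq:typeII} is seen to be equivalent to $\langle P_n, \tilde Q_{k,m}\rangle_W = 0$ with $\deg \tilde Q_{k,m} \le n-1$. Since the planar orthogonal polynomial satisfies $\langle P_n, Q\rangle_W=0$ for every polynomial $Q$ of degree $\le n-1$ (by~\eqref{def:Pn}), it automatically satisfies all of~\eqref{eq:typeII}, which settles existence.

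It then remains to prove uniqueness, and this is the technical heart. Uniqueness follows once the $n$ polynomials $\{\tilde g_{k,m} : 1\le k\le p,\ 0\le m\le n_k-1\}$ are shown to form a basis of the polynomials of degree $\le n-1$: for then $\langle P_n,\tilde Q_{k,m}\rangle_W=0$ for all $k,m$ forces $\langle P_n,z^l\rangle_W=0$ for $l=0,\dots,n-1$, so any monic solution of~\eqref{eq:typeII} is the planar orthogonal polynomial, which is unique. As there are exactly $n=\sum_k n_k$ of the $\tilde g_{k,m}$, it suffices to prove their linear independence. I would do this by Hermite interpolation at the distinct nodes $\overline{a_1},\dots,\overline{a_p}$ with multiplicities $n_1,\dots,n_p$ (legitimate since $\sum_j n_j=n$): the orders of vanishing computed above give that $\tilde g_{k,m} = (W^* g_k)^{(m)}/W^*$ vanishes at $\overline{a_k}$ to order exactly $n_k-1-m$ and at $\overline{a_j}$, $j\ne k$, to order exactly $n_j-m$. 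Pairing $\tilde g_{k,m}$ with the jet coordinate of order $n_k-1-m$ at $\overline{a_k}$ and ordering all coordinates by their co-order (distance from the top of each block), these vanishing orders exhibit a triangular coefficient matrix whose diagonal consists of the leading, hence nonzero, Taylor coefficients; invertibility and therefore independence follow. The main obstacle is exactly this last step, namely controlling the twisted derivatives $(W^* g_k)^{(m)}/W^*$ and extracting their triangular structure, and the balance condition~\eqref{eq:njdef} is indispensable throughout, since it is what keeps every relevant order of vanishing nonnegative and of the correct size.
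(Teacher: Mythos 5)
Your proposal is correct, and its skeleton coincides with the paper's proof step by step: integration by parts $m$ times in the $u$-variable, discarding the polynomial boundary terms $R_{k,m}$ (the paper's $\Pi_{k,m}$) via Cauchy's theorem on the closed contour, the divisibility of $\big(W^*g_k\big)^{(m)}$ by $W^*$ forced by the balance condition \eqref{eq:njdef}, the application of the fundamental identity \eqref{eq:PQscalar} to get $\langle P_n, \tilde Q_{k,m}\rangle_W=0$ with $\deg \tilde Q_{k,m}\leq n-1$, and the reduction of the converse (hence uniqueness) to showing that the $n$ polynomials $\tilde g_{k,m}$ (the paper's $Q_{k,m}$) form a basis of the polynomials of degree $\leq n-1$. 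The one place where you genuinely diverge is the linear-independence argument. The paper evaluates a vanishing linear combination at the nodes $\overline{a_j}$ and runs an iterative elimination that exploits the dichotomy $n_j\in\big\{\lfloor n/p\rfloor,\lceil n/p\rceil\big\}$: the coefficients $\beta_{k,n_k-1}$ with $n_k=\lceil n/p\rceil$ are killed first (no index $k$ has $n_k=n_{j_0}+1$ in that case), then those with $n_k=\lfloor n/p\rfloor$, and the scheme is repeated for successively higher derivatives. You instead pass to Hermite-interpolation jet coordinates at $\overline{a_1},\dots,\overline{a_p}$ with multiplicities $n_1,\dots,n_p$ and observe that the exact vanishing orders (exactly $n_k-1-m$ at $\overline{a_k}$ and exactly $n_j-m$ at $\overline{a_j}$ for $j\neq k$) make the coefficient matrix block lower triangular when coordinates are sorted by co-order, with diagonal blocks that are themselves diagonal with nonzero entries. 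Both arguments rest on the same two facts---the exact orders of vanishing and the balance condition---but yours is uniform in $m$ and dispenses with the ceiling/floor case analysis altogether; the troublesome nonzero values $Q_{k,n_k-1}(\overline{a_j})\neq 0$ when $n_k=n_j+1$, which drive the paper's two-stage elimination, simply land strictly below your diagonal. This is a clean and arguably tidier way to finish, at the modest cost of setting up the jet-coordinate bookkeeping carefully (in particular checking that, for each co-order $s$, the surviving row and column index sets agree, so the diagonal blocks are square).
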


 Note that~\eqref{eq:njdef} implies~$|n_j-n_k| \leq 1$ for every~$j,k=1, \dots, p$. Euclid's division lemma yields
 \begin{equation*}
 n = ap + b
 \end{equation*}
 for some~$a,b \in \Nset \cup \{ 0\}$ such that~$0 \leq b < p$. And it is clear that~$a+1$ and~$a$ will appear in~$(n_1, \dots, n_p)$ exactly~$b$ and~$p-b$ times, respectively.

 \begin{proof}[Proof of Theorem \ref{thm:typeII}]
 Suppose that~$P_n$ is the degree~$n$ monic planar orthogonal polynomial. Take $k \in \{1, \dots, p\}$, and let~$m$ be an integer such that~$0 \leq m \leq n_k-1$. Carrying out~$m$ integration by parts in~\eqref{eq:wkdef}, we get{\samepage
 \begin{align}
 z^m w_k(z) & = (-1)^m \int_0^{\bar{z} \times \infty} \prod_{j=1}^p (u-\overline{a_j})^{c_j + n_j-\delta_{k,j}} \bigg[ \bigg(\frac{\rm d}{{\rm d}u}\bigg)^m {\rm e}^{-uz} \bigg] {\rm d}u \nonumber
 \\
 & = \int_0^{\bar{z} \times \infty} \bigg[ \bigg(\frac{\rm d}{{\rm d}u}\bigg)^m \prod_{j=1}^p (u-\overline{a_j})^{c_j + n_j-\delta_{k,j}} \bigg] {\rm e}^{-uz} \, {\rm d}u + \Pi_{k,m}(z),
 \label{eq:typeIIproof1}
 \end{align}
 where $\Pi_{k,m}$ is a polynomial that comes from the boundary terms at~$u=0$.}

 Observe that
 \begin{equation}
 \label{eq:typeIIproof2}
 \bigg(\frac{\rm d}{{\rm d}u}\bigg)^m \prod_{j=1}^p (u-\overline{a_j})^{c_j + n_j-\delta_{k,j}}
 \end{equation}
 is a polynomial in~$u$ of degree~$\leq c+n-1-m$ with a zero of order~$c_j+n_j - \delta_{k,j} - m$ at~$\overline{a_j}$ for every~$j=1, \dots, p$. From the definition~\eqref{eq:njdef} it follows that~$|n_k - n_j| \leq 1$, which implies that~$n_k \leq n_j + 1 - \delta_{k,j}$ for every~$j=1, \dots, p$. Using~$m \leq n_k-1$, we find that~\eqref{eq:typeIIproof2} has a zero at~$\overline{a_j}$ of order~$\geq c_j$. Therefore,~\eqref{eq:typeIIproof2} is divisible by~$W^*(u)$, and we can write
 \begin{equation}
 \label{eq:typeIIproof3}
 \bigg(\frac{\rm d}{{\rm d}u}\bigg)^m \prod_{j=1}^p (u-\overline{a_j})^{c_j + n_j-\delta_{k,j}} = W^*(u) Q_{k,m}(u),
 \end{equation}
 where $Q_{k,m}$ is a polynomial of degree
$\deg(Q_{k,m}) = n-1-m \leq n-1$
 and~$Q_{k,m}$ has a zero at~$\overline{a_j}$ of order~$n_j - \delta_{k,j}-m$ for every~$j=1, \dots, p$.

 In view of~\eqref{eq:typeIIproof1} and~\eqref{eq:typeIIproof3}, we then have
 \begin{equation*}
 \begin{aligned}
 \frac{1}{2\pi {\rm i}} \oint_{\gamma} P_n(z) W(z) z^m w_k(z)\, {\rm d}z ={} &\frac{1}{2\pi {\rm i}} \oint_{\gamma} P_n(z) W(z) \Bigg[ \int_0^{\bar{z} \times \infty} W^*(u) Q_{k,m}(u) {\rm e}^{-uz} {\rm d}u\Bigg] {\rm d}z
 \\
 &{}+\frac{1}{2\pi {\rm i}} \oint_{\gamma} P_n(z) W(z)\Pi_{k,m}(z)\, {\rm d}z.
 \end{aligned}
 \end{equation*}
 The second term vanishes because of Cauchy's theorem, and the remaining term vanishes because of part $(b)$ in Theorem~\ref{thm:theo1} and the fact that~$\deg(Q_{k,m}) \leq n-1$. Hence, \eqref{eq:typeII} holds.

 Conversely, suppose that~$P_n$ satisfies~\eqref{eq:typeII} with~$w_k$ and~$n_k$ as in the statement of the theorem. Then, by~\eqref{eq:typeIIproof1} and~\eqref{eq:typeIIproof3}, we get
 \begin{equation*}
 \frac{1}{2\pi {\rm i}} \oint_{\gamma} P_n(z) W(z) \int_0^{\bar{z} \times \infty} W^*(u) Q_{k,m}(u) {\rm e}^{-uz}\, {\rm d}u\, {\rm d}z = 0,
 \end{equation*}
 for~$k=1, \dots, p$ and~$m = 0, \dots, n_k-1$. Again, there is no contribution from the polynomial~$\Pi_{k,m}$. This leads directly to~\eqref{eq:Pnequiv2}, provided that the~$Q_{k,m}$ are a basis of the vector space of polynomials of degree~$\leq n-1$. Then, Theorem~\ref{thm:theo1} tells us that~$P_n$ is the degree~$n$ planar orthogonal polynomial.

 The polynomials~$Q_{k,m}$ for $k=1, \dots, p$ and~$m=0, \dots, n_k-1$ have degrees~$\leq n-1$, and because of~\eqref{eq:njdef} there are~$n$ of them. Thus, it suffices to prove that the~$Q_{k,m}$ are linearly independent. Suppose that~$\beta_{k,m}$ are complex numbers such that
 \begin{equation}
 \label{eq:typeIIproof4}
 \sum_{k=1}^p \sum_{m=0}^{n_k-1} \beta_{k,m} Q_{k,m} = 0.
 \end{equation}
 We already noted in the first part of the proof that~$Q_{k,m}$ has a zero at~$\overline{a_j}$ of exact order $n_j-\delta_{k,j}-m$. Hence, $Q_{k,m}(\overline{a_j}) \neq 0$ if and only if~$n_j-\delta_{k,j}-m =0$.

 From~\eqref{eq:njdef}, we have that~$|n_j-n_k| \leq 1$, and since~$m \leq n_k-1$, it is then easy to see that~$Q_{k,m}(\overline{a_j}) \neq 0$ if and only if~$m = n_{k}-1$, and either~$k=j$, or~$k \neq j$ and $n_k = n_j+1$. Thus, by evaluating~\eqref{eq:typeIIproof4} at~$\overline{a_j}$, we obtain
 \begin{equation}
 \label{eq:typeIIproof5}
 \beta_{j,n_j-1} Q_{j,n_j-1}(\overline{a_j}) + \sum_{\substack{k=1\\ n_k = n_j+1}}^p \beta_{k,n_k-1} Q_{k,n_k-1}(\overline{a_j}) = 0.
 \end{equation}

 Suppose~$n_{j_0} = \big\lceil \frac{n}{p} \big\rceil$. Then, there are no indices~$k$ with~$n_k = n_{j_0}+1$, and~\eqref{eq:typeIIproof5} implies that~$\beta_{{j_0},n_{j_0}-1} = 0$ since~$Q_{j_0,n_{j_0}-1}(\overline{a_{j_0}}) \neq 0$. Suppose~$n_{j_0} = \big\lfloor \frac{n}{p} \big\rfloor$. Then, every~$k$ with~$n_k = n_{j_0}+1$ satisfies~$n_k = \big\lceil \frac{n}{p} \big\rceil$, and we just proved that~$\beta_{k,n_k-1} = 0$ for such~$k$. Thus, by using~\eqref{eq:typeIIproof5} again we obtain that~$\beta_{{j_0},n_{j_0}-1} = 0$. Since~$j_0$ can be arbitrary, we conclude that~$\beta_{{j},n_{j}-1} = 0$ for all~$j=1, \dots, p$.

 The formula~\eqref{eq:typeIIproof5} reduces to
 \begin{equation}
 \label{eq:typeIIproof6}
 \sum_{k=1}^p \sum_{m=0}^{n_k-2} \beta_{k,m} Q_{k,m} = 0.
 \end{equation}
 We continue by looking at the remaining polynomials~$Q_{k,m}$ in~\eqref{eq:typeIIproof6} and observe that the only ones with~$\frac{\rm d}{{\rm d}z}Q_{k,m}\big|_{z=\overline{a_j}} \neq 0$ are those with~$m = n_k-2$ and either~$k=j$, or~$k\neq j$ and~$n_k=n_j+1$. Arguing as before we find that~$\beta_{j,n_j-2} = 0$ for every~$j=1, \dots, p$. Continuing in this way by considering the higher order derivatives, we ultimately find that~$\beta_{k,m} = 0$ for all~$k=1, \dots, p$ and $m=0, \dots, n_k-1$, which shows that the polynomials~$Q_{k,m}$ are indeed linearly independent.
 \end{proof}

 The connection between Riemann--Hilbert problems and type II multiple orthogonality is well known, see Van Assche, Geronimo, and Kuijlaars~\cite{VAGK}. Therefore, we arrive at the following Riemann--Hilbert problem of size~$(p+1) \times (p+1)$, corresponding to Theorem~\ref{thm:typeII} (first appeared in Lee and Yang~\cite{LY2}).

 \begin{rhproblem}[type II multiple orthogonality]
 Let $w_1, \dots, w_p$ be given by~formula~\eqref{eq:wkdef}. Find~$Y \colon \Cset \setminus \gamma \to \Cset^{(p+1) \times (p+1)}$ satisfying
 \begin{enumerate}
 \item[$(1)$] $Y(z)$ is analytic for~$z \in \Cset \setminus \gamma$;

 \item[$(2)$] $Y_+(z) = Y_-(z) J_Y(z)$ for $z \in \gamma$, where
$
 J_Y(z) =
 \begin{bmatrix}
 1 & w_1(z)&\cdots& w_{p}(z)\\
 0 & 1& \cdots & 0\\
 \vdots & \vdots&\ddots&\vdots\\
 0&0&\cdots& 1
 \end{bmatrix}
$;

 \item[$(3)$] $Y(z) = \big(I+O\big(\frac{1}{z}\big)\big)
 \begin{bmatrix}
 z^n&0&\cdots&0\\
 0&z^{-n_1}&\cdots& 0\\
 \vdots&\vdots& \ddots&\vdots\\
 0&0&\cdots&z^{-n_p}
 \end{bmatrix}
 $ as $z \to \infty$.
 \end{enumerate}
 \end{rhproblem}

 The unique solution has $P_n$ in its first row
 \begin{equation} \label{eq:YtypeII}
 Y(z)=
 \begin{bmatrix}
 P_n(z) & \displaystyle \frac{1}{2 \pi {\rm i}} \oint_\gamma P_n(s) w_{1}(s) \frac{{\rm d}s}{s-z} &\cdots & \displaystyle \frac{1}{2 \pi {\rm i}} \oint_\gamma P_n(s) w_{p}(s) \frac{{\rm d}s}{s-z}\\
 * & *&\cdots&*\\
 \vdots & \vdots& \ddots& \vdots\\
 * & *&\cdots&*
 \end{bmatrix}
 \end{equation}
 for $z \in \Cset \setminus \gamma$.

 \section{Conclusion}
 \label{sec:conc}
 It is known that the planar orthogonal polynomials orthogonal with respect to the measure~\eqref{eq:muW} are multiple orthogonal polynomials of type~II (see~\cite{LY2}). Assuming $W$ is a polynomial weight, we have shown that the same planar orthogonal polynomials are also multiple orthogonal polynomials of type~I. It is remarkable that the planar orthogonality manifests in two different ways at once. We are not aware of any other examples of such a phenomenon except for the case $p=1$, where both situations reduce to the usual orthogonality.

 Generally speaking, multiple orthogonality of each type,~I and~II, is characterized by a~Rie\-mann--Hilbert problem. Moreover, there is a~canonical correspondence between such Riemann--Hilbert problems. Indeed, if~$Y$ is given by~\eqref{eq:YtypeII} then not only does it solve a Riemann--Hilbert problem for multiple orthogonal polynomials of type~II, but also its inverse transpose~$Y^{-t}$ contains multiple orthogonal polynomials of type~I in each of its rows. In the course of the paper, we have established several different Riemann--Hilbert problems connected with orthogonality of type~I. However, if~$p \geq 2$, then the orthogonal polynomial~$P_n$ enters the inverse transpose of~\eqref{eq:YtypeII} only as part of a bigger algebraic expression and not by itself as it should in the case of the canonical correspondence; therefore, neither of our Riemann--Hilbert problems associated to multiple orthogonality of type~I are related to the type II problem in a canonical way.

 A major interest in stating the Riemann--Hilbert problems is to use them for the asymptotic analysis. We do not address this topic in the present paper, however we mention that Lee and Yang~\cite{LY3} used the Riemann--Hilbert problem corresponding to multiple orthogonality of type~II for asymptotic analysis in the situation where the~$c_j$ are fixed and~$n \to \infty$. It would be very interesting to deal with the case of varying weights, namely, when the $c_j$ depend on $n$ and tend to infinity at a rate proportional to~$n$. For~$p=1$ this was accomplished in~\cite{BBLM}, and we hope that one of the Riemann--Hilbert problems in our paper can be useful for the case~$p \geq 2$.

 \subsection*{Acknowledgments}
S.B.\ is supported by FWO Senior Postdoc Fellowship, project 12K1823N. A.B.J.K.\ was supported by the long term structural funding ``Methusalem grant of the Flemish Government'', and by FWO Flanders projects EOS 30889451 and G.0910.20. I.P.\ was supported by FWO Flanders project G.0910.20.

\pdfbookmark[1]{References}{ref}
\LastPageEnding


\begin{thebibliography}{99}
\footnotesize\itemsep=0pt

\bibitem{ANPV}
Akemann G., Nagao T., Parra I., Vernizzi G., Gegenbauer and other planar
 orthogonal polynomials on an ellipse in the complex plane, \href{https://doi.org/10.1007/s00365-020-09515-0}{\textit{Constr.
 Approx.}} \textbf{53} (2021), 441--478, \href{https://arxiv.org/abs/1905.02397}{arXiv:1905.02397}.

\bibitem{AV}
Akemann G., Vernizzi G., Characteristic polynomials of complex random matrix
 models, \href{https://doi.org/10.1016/S0550-3213(03)00221-9}{\textit{Nuclear Phys.~B}} \textbf{660} (2003), 532--556,
 \href{https://arxiv.org/abs/hep-th/0212051}{arXiv:hep-th/0212051}.

\bibitem{BBLM}
Balogh F., Bertola M., Lee S.-Y., McLaughlin K.D.T.-R., Strong asymptotics of the
 orthogonal polynomials with respect to a measure supported on the plane,
 \href{https://doi.org/10.1002/cpa.21541}{\textit{Comm. Pure Appl. Math.}} \textbf{68} (2015), 112--172,
 \href{https://arxiv.org/abs/1209.6366}{arXiv:1209.6366}.

\bibitem{BGM}
Balogh F., Grava T., Merzi D., Orthogonal polynomials for a class of measures
 with discrete rotational symmetries in the complex plane, \href{https://doi.org/10.1007/s00365-016-9356-0}{\textit{Constr.
 Approx.}} \textbf{46} (2017), 109--169, \href{https://arxiv.org/abs/1509.05331}{arXiv:1509.05331}.

\bibitem{BEG}
Bertola M., Elias~Rebelo J.G., Grava T., Painlev\'e~{IV} critical asymptotics
 for orthogonal polynomials in the complex plane, \href{https://doi.org/10.3842/SIGMA.2018.091}{\textit{SIGMA}} \textbf{14}
 (2018), 091, 34~pages, \href{https://arxiv.org/abs/1802.01153}{arXiv:1802.01153}.

\bibitem{BK}
Bleher P.M., Kuijlaars A.B.J., Orthogonal polynomials in the normal matrix
 model with a cubic potential, \href{https://doi.org/10.1016/j.aim.2012.03.021}{\textit{Adv. Math.}} \textbf{230} (2012),
 1272--1321, \href{https://arxiv.org/abs/1106.6168}{arXiv:1106.6168}.

\bibitem{BF}
Byun S.-S., Forrester P.J., Progress on the study of the {G}inibre
 ensembles~{I}:~{GinUE}, \href{https://arxiv.org/abs/2211.16223}{arXiv:2211.16223}.

\bibitem{DS}
Dea\~no A., Simm N., Characteristic polynomials of complex random matrices and
 {P}ainlev\'e transcendents, \href{https://doi.org/10.1093/imrn/rnaa111}{\textit{Int. Math. Res. Not.}} \textbf{2022}
 (2022), 210--264, \href{https://arxiv.org/abs/1909.06334}{arXiv:1909.06334}.

\bibitem{Deif}
Deift P., Orthogonal polynomials and random matrices: a {R}iemann--{H}ilbert
 approach, \textit{Courant Lect. Notes Math.}, Vol.~3, New York University,
 New York, 1999.

\bibitem{DKMVZ}
Deift P., Kriecherbauer T., McLaughlin K.T.-R., Venakides S., Zhou X., Uniform
 asymptotics for polynomials orthogonal with respect to varying exponential
 weights and applications to universality questions in random matrix theory,
 \href{https://doi.org/10.1002/(SICI)1097-0312(199911)52:11<1335::AID-CPA1>3.0.CO;2-1}{\textit{Comm. Pure Appl. Math.}} \textbf{52} (1999), 1335--1425.

\bibitem{DZ}
Deift P., Zhou X., A steepest descent method for oscillatory
 {R}iemann--{H}ilbert problems. {A}symptotics for the {MK}d{V} equation,
 \href{https://doi.org/10.2307/2946540}{\textit{Ann. of Math.}} \textbf{137} (1993), 295--368,
 \href{https://arxiv.org/abs/math.AP/9201261}{arXiv:math.AP/9201261}.

\bibitem{FIK2}
Fokas A.S., Its A.R., Kitaev A.V., The isomonodromy approach to matrix models
 in {$2$}{D} quantum gravity, \href{https://doi.org/10.1007/BF02096594}{\textit{Comm. Math. Phys.}} \textbf{147} (1992),
 395--430.

\bibitem{GTV}
Gustafsson B., Teodorescu R., Vasil'ev A., Classical and stochastic {L}aplacian
 growth, \textit{Adv. Math. Fluid Mech.}, \href{https://doi.org/10.1007/978-3-319-08287-5}{Birkh\"auser}, Cham, 2014.

\bibitem{HW}
Hedenmalm H., Wennman A., Planar orthogonal polynomials and boundary
 universality in the random normal matrix model, \textit{Acta Math.}
 \textbf{227} (2021), 309--406, \href{https://arxiv.org/abs/#2}{arXiv:1710.06493},
 \href{https://doi.org/10.4310/acta.2021.v227.n2.a3}.

\bibitem{Karp}
Karp D., Holomorphic spaces related to orthogonal polynomials and analytic
 continuation of functions, in Analytic Extension Formulas and Their
 Applications ({F}ukuoka, 1999/{K}yoto, 2000), \textit{Int. Soc. Anal. Appl.
 Comput.}, Vol.~9, \href{https://doi.org/10.1007/978-1-4757-3298-6_10}{Kluwer Acad. Publ.}, Dordrecht, 2001, 169--187.


\bibitem{LY1}
Lee S.-Y., Yang M., Discontinuity in the asymptotic behavior of planar
 orthogonal polynomials under a~perturbation of the {G}aussian weight,
 \href{https://doi.org/10.1007/s00220-017-2888-8}{\textit{Comm. Math. Phys.}} \textbf{355} (2017), 303--338,
 \href{https://arxiv.org/abs/1607.02821}{arXiv:1607.02821}.

\bibitem{LY2}
Lee S.-Y., Yang M., Planar orthogonal polynomials as {T}ype~{II} multiple
 orthogonal polynomials, \href{https://doi.org/10.1088/1751-8121/ab1af9}{\textit{J.~Phys.~A}} \textbf{52} (2019), 275202,
 14~pages, \href{https://arxiv.org/abs/1801.01084}{arXiv:1801.01084}.

\bibitem{LY3}
Lee S.-Y., Yang M., Strong asymptotics of planar orthogonal polynomials:
 {G}aussian weight perturbed by finite number of point charges,
 \href{https://arxiv.org/abs/2003.04401}{arXiv:2003.04401}.

\bibitem{VA}
Van~Assche W., Orthogonal and multiple orthogonal polynomials, random matrices,
 and {P}ainlev\'e equations, in Orthogonal Polynomials, \textit{Tutor. Sch. Workshops
 Math. Sci.}, \href{https://doi.org/10.1007/978-3-030-36744-2_22}{Birkh\"auser}, Cham, 2020, 629--683, \href{https://arxiv.org/abs/1904.07518}{arXiv:1904.07518}.

\bibitem{VAGK}
Van~Assche W., Geronimo J.S., Kuijlaars A.B.J., Riemann--{H}ilbert problems for
 multiple orthogonal polynomials, in Special Functions 2000: Current
 Perspective and Future Directions ({T}empe, {AZ}), \textit{NATO Sci. Ser.~II
 Math. Phys. Chem.}, Vol.~30, \href{https://doi.org/10.1007/978-94-010-0818-1_2}{Kluwer Acad. Publ.}, Dordrecht, 2001, 23--59.

\bibitem{EM}
van Eijndhoven S.J.L., Meyers J.L.H., New orthogonality relations for the
 {H}ermite polynomials and related {H}ilbert spaces, \href{https://doi.org/10.1016/0022-247X(90)90334-C}{\textit{J.~Math. Anal.
 Appl.}} \textbf{146} (1990), 89--98.

\bibitem{WW}
Webb C., Wong M.D., On the moments of the characteristic polynomial of a
 {G}inibre random matrix, \href{https://doi.org/10.1112/plms.12225}{\textit{Proc. Lond. Math. Soc.}} \textbf{118} (2019),
 1017--1056, \href{https://arxiv.org/abs/1704.04102}{arXiv:1704.04102}.

\end{thebibliography}
\end{document}